\DeclareMathOperator{\down}{\downarrow}
\DeclareMathOperator{\up}{\uparrow}
\DeclareMathOperator{\Cl}{Cl}
\DeclareMathOperator{\join}{\oplus}
\newcommand \limplies {\mathbin{\rightarrow}}
\newcommand \aand {\mathbin{\wedge}}
\newcommand{\from}{\colon}
\newcommand{\modal}{\mathbf}
\newcommand{\F}{\modal F}
\renewcommand{\P}{\modal P}
\newcommand{\defnn}[1]{\textbf{#1}}
\newcommand\restr[1]{{\restriction_{#1}}}
\newcommand\set[1]{\{#1\}}
\newcommand\mcs{\mathsf{MCS}}
\renewcommand{\vec}[1]{\boldsymbol{#1}}
\renewcommand{\algorithmicindent}{2.65mm}
\DeclarePairedDelimiter\abs{\lvert}{\rvert}
\def\smallskip{\vskip\smallskipamount}
\def\medskip{\vskip\medskipamount}
\def\bigskip{\vskip\bigskipamount}
\def\restate{\par\medskip\noindent\hbox{\bf Theorem~\ref{pspace}}\it\ }  
\def\endrestate{
\par\medskip}
\newenvironment{restate_pspace}{\restate}{\endrestate}
\def \tikzconst{2.4}
\def\reals {\mathbb{R}}
\begin{document}

\begin{frontmatter}
  \title{The temporal logic of two-dimensional Minkowski spacetime with slower-than-light accessibility is decidable}
  \author{Robin Hirsch}\footnote{r.hirsch@ucl.ac.uk}
  \author{Brett McLean}\footnote{b.mclean@cs.ucl.ac.uk}
  \address{Department of Computer Science \\ University College London\\Gower Street, London WC1E 6BT}
  
  \begin{abstract}
  We work primarily with the Kripke frame consisting of two-dimen\-si\-on\-al Minkowski spacetime with the irreflexive accessibility relation `can reach with a slower-than-light signal'. We show that in the basic temporal language, the set of validities over this frame is decidable. We then refine this to PSPACE-complete. In both cases the same result for the corresponding reflexive frame follows immediately. With a little more work we obtain PSPACE-completeness for the validities of the Halpern--Shoham logic of intervals on the real line with two different combinations of modalities. 
  \end{abstract}

  \begin{keyword}
  temporal logic, basic temporal language, Minkowski spacetime, frame validity, Halpern--Shoham logic.
  \end{keyword}
 \end{frontmatter}




\section{Introduction}
 
Minkowski spacetime refers to the flat spacetime of special relativity, where in any inertial coordinates $(\vec r, ct)$, light travels in straight lines at a $45^\circ$ angle to the time axis.  To view this as a Kripke frame, there are at least four natural accessibility relations to chose from: reflexive and irreflexive versions of `can reach with a lightspeed-or-slower signal' and `can reach with a slower-than-light signal'. See below for the formal  definitions in the two-dimensional case.  

 For the basic \emph{modal} language, Goldblatt found that, regardless of the number of spatial dimensions, both reflexive choices produce the logic $\mathbf{S4.2}$ (reflexivity, transitivity, confluence) \cite{Goldblatt1980}. Shapirovsky and Shehtman proved the irreflexive slower-than-light logic is $\mathbf{OI.2}$---transitivity, seriality, confluence, \emph{two-density} (see \Cref{section2})---again regardless of dimension \cite{DBLP:conf/aiml/ShapirovskyS02}. Both $\mathbf{S4.2}$ and $\mathbf{OI.2}$ are $\mathsf{PSPACE}$-complete \cite{Shapirovsky2005-SHAOPI}.
In contrast, Shapirovsky has shown that with irreflexive \emph{exactly lightspeed} accessibility, validity is undecidable  \cite{shapirovsky2010simulation}.

For the basic \emph{temporal} language, the problems of axiomatising and determining the complexity of the validities of these frames had all been open for decades---Shehtman recommended an investigation of the two-dimensional case (one time and one space dimension) in the concluding remarks of \cite{Shehtman1983}.
In the two-dimensional case, starting with coordinates $(r, ct)$,  we may rotate 
 the axes through 45$^\circ$ to get  coordinates $(x, y)$, where $x=\frac{1}{\sqrt2}(ct+r),\; y=\frac{1}{\sqrt2}(ct-r)$.  With these coordinates the reflexive   lightspeed-or-slower relation $\leq$, and  the irreflexive  slower-than-lightspeed relation $\prec$, are given by 
\begin{align}\label{order}
\!\!\!\!\!\!(x, y)\leq (x', y')&\!\iff\! x\leq x'\wedge y\leq y',&
(x, y)\prec (x', y')&\!\iff\! x<x'\wedge y<y
\end{align}
where $<, \leq$ on the right are the usual irreflexive/reflexive orderings of the reals, and $<, \preceq$ are obtained from $\leq, \prec$ by deleting/adding the identity, respectively.

 Recently, Hirsch and Reynolds managed to show that for this two-dimensional case, with either reflexive or irreflexive lightspeed-or-slower accessibility, the validity problem is $\mathsf{PSPACE}$-complete \cite{hrminkowski}. However, they were unable to obtain decidability/complexity results for slower-than-light accessibility. Indeed decidability appears in item (2) in the list of open problems at the end of their paper. In this paper we solve Hirsch and Reynolds' problem, eventually proving the following.

\begin{restate_pspace}
 On the frame consisting of two-dimensional Minkowski spacetime equipped with the irreflexive slower-than-light accessibility relation, the set of validities of the basic temporal language is $\mathsf{PSPACE}$-complete. The same is true with reflexive slower-than-light accessibility.
\end{restate_pspace}

The proof of Theorem~\ref{pspace} follows that of \cite{hrminkowski} very closely. The only additional insight needed is that all pertinent information about the behaviour of a valuation on a light-line can be captured in a finite way---see Definition~\ref{bi-trace}---, despite all of a light-line's points being mutually inaccessible.   


The proof of the main result is structured as follows.
\begin{enumerate}
\item
Given a fixed formula $\phi$ whose satisfiability is to be determined, we define the maximal consistent sets of subformulas/negated subformulas of $\phi$, where consistency is with respect to the class of all temporal frames.

\item
We define \emph{surrectangles} by recording a maximal consistent set at each point of a rectangle, together with some information about the maximal consistent sets holding near but beyond the boundaries of the rectangle.
\item
Starting in \Cref{section_biboundary}, we define \emph{biboundaries}, also based on the maximal consistent sets. Biboundaries have finite specifications and the intuition for them is as a record of the information contained near the boundary of a surrectangle, plus a little from its interior. Indeed we define the biboundary $\partial^s$ determined by a given surrectangle $s$.

\item
We define three operations on biboundaries: \emph{joins}, \emph{limits}, and \emph{shuffles}, and we define the set of \emph{fabricated biboundaries} to be those biboundaries formed by iterating these operations, starting from certain basic biboundaries. This is all computable, and the iterative procedure must terminate, for the biboundaries are finite in number.

\item
We show that every fabricated biboundary is given by $\partial^s$ for some surrectangle $s$, by describing a recursive construction of $s$. (\Cref{from_biboundary})

\item
Conversely, we show that for every surrectangle $s$, the biboundary $\partial^s$ is fabricated. (\Cref{from_surrectangle})

\item
We deduce the decidability of the validity problem over our frame.

\item
In \Cref{section_pspace}, we give a procedure for deciding if a biboundary is fabricated using polynomial space, and also note the satisfiability task is $\mathsf{PSPACE}$-hard. Consequently our result that validity is decidable is refined to validity being $\mathsf{PSPACE}$-complete.
\end{enumerate}

In \cite{Halpern:1991:PML:115234.115351}, Halpern and Shoham introduced a family of modal logics in which the entities under discussion are intervals. These logics have subsequently come to be highly influential, and their axiomatisability, decidability, and complexity extensively studied \cite{venema1990,10.1007/978-3-540-89439-1_41,5970233,BMSS11,6311113,MM13,Bresolin:2017:HFH:3130378.3105909}. 
 In \Cref{section_intervals}, we describe how the proof of the complexity of validity for two-dimensional Minkowski spacetime can be adapted to prove $\mathsf{PSPACE}$-completeness of the temporal logic of intervals where the accessibility relation is $\mathtt{overlaps} \cup \mathtt{meets} \cup \mathtt{before}$, or its reflexive closure.

\section{Preliminaries and surrectangles}\label{section2}

We often, but not exclusively, follow the terminology and notation of \cite{hrminkowski}.
We take as primitive the propositional connectives $\neg$ and $\lor$, and modal operators $\modal F$ and $\modal P$; the usual abbreviations apply. The semantics is the usual semantics on temporal frames---Kripke frames for which the accessibility relation for $\modal P$ is the converse of that for $\modal F$.  
An example of a formula that is valid over  two-dimensional slower-than-light frames but not over irreflexive  lightspeed-or-slower frames is the two-density formula \begin{align*}(\F p\wedge\F q)\rightarrow\F(\F p\wedge\F q)\end{align*}
asserting that if $x$ precedes both $z_1$ and $z_2$, then there exists $y$ that is between $x$ and $z_1$ \emph{and} between $x$ and $z_2$.

Throughout, $\phi$ is a fixed formula whose satisfiability is to be determined. The \defnn{closure} $\Cl(\phi)$ of $\phi$ is the set of all subformulas and negated subformulas of $\phi$.  A \defnn{maximal consistent set} is a subset of $\Cl(\phi)$ that is satisfiable in some temporal frame and is maximal with respect to the inclusion ordering, subject to the satisfiability constraint. We denote the set of maximal consistent sets by $\mcs$. It is well known that satisfiability in a temporal frame is decidable,
 indeed $\mathsf{PSPACE}$-complete \cite{Spaan1993}. Hence the set of all maximal consistent sets of $\phi$ is a (total) computable function of $\phi$. The relation on $\mcs$ given by
\begin{align*}
m\lesssim n &\iff &&\forall\F\psi\in \Cl(\phi)\;  ((\psi\in n\limplies \F\psi\in m)\aand(\F\psi\in n\limplies \F\psi\in m))    \\  
&&\aand\,&
\forall\P\psi\in \Cl(\phi)\;   ((\psi\in m\limplies \P\psi\in n)\aand(\P\psi\in m\limplies \P\psi\in n))
\end{align*}
is transitive, and so defines a preorder on reflexive elements. We call a $\lesssim$-equivalence class a \defnn{cluster} and write $\leq$ for the partial order on clusters induced by $\lesssim$. The notation $<$ means $\leq$ but not equal. We extend these notations to compare a cluster $c$ and a (not necessarily $\lesssim$-reflexive) maximal consistent set $m$ as follows. Write $m \leq c$ if for all $n \in c$ we have $m \lesssim n$, and write $m < c$ if for all $n \in c$ we have ($m \lesssim n \aand m \neq n)$. Similarly for $c \leq m$ and $c < m$.

\begin{definition}
A formula of the form $\F \psi$ is a \defnn{future defect} of a maximal consistent set $m$ if $\F \psi \in m$. A future defect of a set $S$ of maximal consistent sets is a future defect of any member of $S$ \emph{unless} we explicitly identify $S$ as a cluster. The formula $\F \psi$ is a future defect of a cluster $c$ if $\F \psi$ is contained in some $m \in c$, but, for all $n \in c$, we have $\psi \not\in n$. A future defect $\F \psi$ is \defnn{passed up} to a set $S$ of maximal consistent sets if either $\psi$ or $\F \psi$ belongs to some $m \in S$. A \defnn{past defect} is defined similarly.
\end{definition}

Recall from \eqref{order} the ordering $\prec$ and the associated  `can reach with a slower-than-light signal'  Kripke frame  $(\reals^2, \prec)$. Subsets of $\mathbb{R}^2$ inherit the same ordering. The notation $\up\vec x$ denotes the set $\{\vec y \mid \vec x \prec \vec y\}$. (The set $\vec y$ is drawn from should be clear.) We define the operations $\vee$ and $\wedge$ by \begin{align*}
(x, y)\vee (x', y') &= (\operatorname{max}\{x, x'\},
\operatorname{max}\{y, y'\}),\\ (x, y)\wedge (x', y') &= (\operatorname{min}\{x, x'\}, \operatorname{min}\{y, y'\}).\end{align*} (According to lightspeed-or-slower accessibility $\vee$ is join and $\wedge$ is meet). 
 We also define the partial order $\triangleleft$ on $\mathbb{R}^2$ by $(x, y)\triangleleft (x', y')\iff x\leq x'$, $y\geq y'$, and $(x, y)\neq (x', y')$. When this holds we say that $(x, y)$ is `northwest' of $(x', y')$.

The \defnn{diagonal dual} of a condition $c$ on a frame/model whose domain is a subset of $\mathbb{R}^2$ is the condition that $c$ holds on the frame/model obtained by swapping the $x$- and $y$-axes (which will not affect accessibility). 
 The \defnn{temporal dual} is the result of reversing the $x$-axis and reversing the $y$-axis (which reverses accessibility), and also swapping $\F$ and $\P$ in formulas. 
When we say `all duals' we mean the diagonal dual, the temporal dual, and the diagonal temporal dual.

Suppose we have a preorder-preserving map $f$ from a subset of $\mathbb R^2$ to $\mcs$  
 and that $f$ is defined on $\mathcal U \cap \up \vec x$ for some $\mathcal U$ an open neighbourhood (in $\mathbb R^2$) of $\vec x$. Then it is easy to see there is a unique cluster $c$ such that there exists a neighbourhood $\mathcal U'$ of $\vec x$ such that $f$ only takes values in $c$ on $\mathcal U' \cap \up \vec x$. We denote this cluster by $f^+(\vec x)$. The analogous value for $\down \vec x$ we denote $f^-(\vec x)$.

Let $(x, y), (x', y)$ be two distinct points on a horizontal line segment in the domain of the preorder-preserving map $f$.  Then $(x, y)\not\prec(x', y)$ so we do not know  the relation between $f(x, y)$ and $f(x', y)$. However, it is easy to see that $x\leq x' \implies f^+(x, y)\lesssim f^+(x', y)$.  Similarly for $f^-$. If $f^+$ is constantly equal to $c$ on an open line segment  $l$ and defined at the   left end of $l$ then $f^+$ also equals $c$ there.\footnote{For line segments, `open' and `closed' have their usual meanings of `excludes end points' and `includes end points', respectively.} Similarly for $f^-$ with right ends. All diagonal duals of statements in this paragraph (that is, statements for vertical lines) hold similarly.

The following concept will be used in definitions that follow, and informs the way we think of a bi-trace (Definition~\ref{bi-trace}) as specifying behaviour \emph{on} a horizontal or vertical line segment.
\begin{definition}
Let $c^-$ and $c^+$ be clusters. The \defnn{interpolant} of $c^-$ and $c^+$ is the set of all $m \in \mcs$ with $c^- \leq m \leq c^+$ such that all future defects of $m$ are passed up to $c^+$, and all past defects of $m$ are passed down to $c^-$.
\end{definition}

\begin{definition}\label{bi-trace}
A \defnn{bi-trace} (of length $n$) is two sequences $c_0^+ \leq \dots \leq c_n^+$ and $c_0^- \leq \dots \leq c_n^-$ of clusters, and one sequence $b_1, \dots, b_n \in \mcs$, with the following constraints.
\begin{itemize}
\item
For each $i \leq n$ we have $c_i^- \leq c_i^+$, and the interpolant of $c_i^-$ and $c_i^+$ is nonempty.

\item
For each $i < n$ either $c_i^- < c_{i+1}^-$ or $c_i^+ < c_{i+1}^+$.
\item
For each $i < n$ we have $c_i^- \leq b_{i+1} \leq c_{i+1}^+$.

\item
For each $i < n$ all future defects of $b_{i+1}$ are passed up to $c_{i+1}^+$ and all past defects passed down to $c_i^-$.
\end{itemize}
Formally, we consider a bi-trace to be the interleaving of its three sequences, with the advantage that we can use the notation $(c_0^-, c_0^+, b_1, c_1^-, \dots, c_n^+)$ to indicate a bi-trace.
We call the $c_i^+$'s the \defnn{upper clusters}, the $c_i^-$'s the \defnn{lower clusters}, $c_0^+$ and $c_0^-$ the \defnn{initial clusters}, and $c_n^+$ and $c_n^-$ the \defnn{final clusters}. Each pair $c_i^-, c_i^+$ is a \defnn{cluster pair} of the bi-trace, and each $b_i$ a \defnn{transition value}. The top part of \Cref{fig:bi-trace} suggests how to visualise a bi-trace.  There are only finitely many bi-traces (because of the second condition in their definition).
\end{definition}

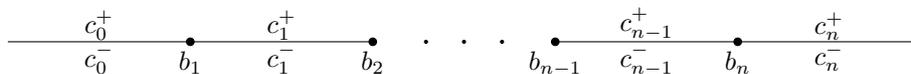
\begin{figure}[H]
\centering
\begin{tikzpicture}
\foreach \x in {0,1}
{
\draw[fill] (\tikzconst*\x+\tikzconst,0) circle [radius=0.05];
\node [above] at (\tikzconst*\x+.5*\tikzconst, -.1) {$c^+_\x$};
\node [below] at (\tikzconst*\x+.5*\tikzconst, 0.1) {$c^-_\x$};
\draw (\tikzconst*\x,0)--(\tikzconst*\x +\tikzconst,0);
}
\node [below] at (\tikzconst, 0) {$b_1$};
\node [below] at (\tikzconst*2, 0) {$b_2$};
\foreach \x in {3,4}
{
\draw[fill] (\tikzconst*\x,0) circle [radius=0.05];
\draw (\tikzconst*\x,0)--(\tikzconst*\x +\tikzconst,0);
}
\node [below] at (\tikzconst+\tikzconst*2, 0) {$b_{n-1}$};
\node [above] at (\tikzconst*3.5, -.1) {$c^+_{n-1}$};
\node [below] at (\tikzconst*3.5, 0.1) {$c^-_{n-1}$};
\node [below] at (\tikzconst+\tikzconst*3, 0) {$b_{n}$};
\node [above] at (\tikzconst*4.5, -.1) {$c^+_n$};
\node [below] at (\tikzconst*4.5, 0.1) {$c^-_n$};
\draw[fill] (2.5*\tikzconst-.5,0) circle [radius=0.02];
\draw[fill] (2.5*\tikzconst,0) circle [radius=0.02];
\draw[fill] (2.5*\tikzconst+.5,0) circle [radius=0.02];
\end{tikzpicture}
\vspace{-16pt}
\caption{A bi-trace}\label{fig:bi-trace}
\end{figure}

\begin{definition}
Let $t_1 = (c_0^-, c_0^+, b_1, \dots, c_n^+)$ and $t_2 =\allowbreak (e_0^-, e_0^+, d_1, \dots, e_k^+)$ be bi-traces, and $a \in \mcs$. Then $t_1 + a + t_2$ is defined and equal to $(c_0^-, c_0^+, b_1, \dots, c_n^+, a, e_0^-, e_0^+, d_1, \dots, e_k^+)$ if this is a bi-trace. It is also defined if the final clusters of $t_1$ equal the initial clusters of $t_2$, and $a$ is in the interpolant of $c_n^-$ and $c_n^+$, in which case it equals $(c_0^-, c_0^+, b_1, \dots, c_n^+, d_1, \dots, e_k^+)$.
\end{definition}

We now define surrectangles.  
Intuitively, the domain of a surrectangle is a rectangle plus infinitesimally more beyond any closed edges of the rectangle (a `surreal rectangle'), and a surrectangle records a valuation on this domain.  

\begin{definition}\label{surrectangle}
A \defnn{rectangle} is a product of two intervals of $\mathbb R$ (with unbounded and single-point intervals both allowed); it is \defnn{degenerate} if either interval is a single point, otherwise it is \defnn{nondegenerate}. An \defnn{edge} of a rectangle $R$ is an edge of the closure of $R$ in $\mathbb R^2$ and is not considered to include its end points; a \defnn{closed edge} of $R$ is an edge of $R$ contained in $R$.  An \defnn{upper edge} of $R$ is either a horizontal edge with maximal vertical component, or a vertical edge with maximal horizontal component, a lower edge is defined dually.
A rectangle is \defnn{open}/\defnn{closed} if it is open/closed in $\mathbb R^2$. The notation $[ \vec b,  \vec t]$, for points $\vec b = (b_1, b_2)$ and $\vec t = (t_1, t_2)$, signifies the closed rectangle $\{(x, y) \in \mathbb R^2 \mid b_1 \leq x \leq t_1\text{ and }b_2 \leq y \leq t_2\}$.

 A \defnn{surrectangle} consists of the following data.
\begin{enumerate}[(1)]
\item
A preorder-preserving map $f \from (R, \prec) \to (\mcs, \lesssim)$, for some \emph{nondegenerate} rectangle $R$. We call $R$ the \defnn{rectangle} and $f$ the \defnn{core map} of the surrectangle.

\item\label{segments}
For each closed horizontal upper edge $e$ of $R$ with endpoints $(x_0, y)$ and $(x', y)$,\footnote{Here, it could be that $x_0 = -\infty$ and/or $x' = \infty$; this does not present any problems.} a finite sequence $c_0^+ \leq \dots \leq c_n^+$ of clusters and a sequence $(x_1,  y), \ldots ,(x_{n}, y) \in e$, with $x_0 < \dots < x_n < x_{n+1}$, defining $x_{n+1}$ to be $x'$.  Similar finite sequences for any closed vertical and/or lower edges.\footnote{For lower edges, these supplementary clusters are the ones denoted $c_i^-$.}
\end{enumerate}
And is required to satisfy the following constraints.
\begin{enumerate}[(1)]\setcounter{enumi}{2}
\item \label{f-}
For each closed horizontal upper edge as above, $f^-$ is constant on each open line segment $((x_i, y), (x_{i+1}, y))$ (let this constant cluster be $c_i^-$), and $(c_0^-, c_0^+, b_1, c_1^-, \ldots, c_n^-,  c_n^+)$ forms a bi-trace, where $b_i$ is defined to be $f(x_i, y)$, for each $i$. Also, all duals of this constraint. (\Cref{fig:edge} suggests how to visualise a closed edge.)

\item\label{no_defects}
For any point $\vec x \in R$, if $\F \psi \in f(\vec x)$ either
\begin{itemize}
\item
\textbf{resolved internally:} there is $\vec y \in \up \vec x$ such that $\psi \in f(\vec y)$,

\item
\textbf{passed upwards:} $R$ has a boundary point $\vec y$ either due north or due east of $\vec x$ such that $\F \psi$ is passed up to $f(\vec y)$.
\end{itemize}
\item
The temporal dual of \eqref{no_defects} holds.
\end{enumerate}
In \eqref{segments}, we call $c_0^+ , \dots , c_n^+$ the \defnn{supplementary clusters} of $e$, we call $(x_1,  y), \ldots \allowbreak,(x_{n}, y)$ the \defnn{transition points} of  $e$, and we call $f(x_1, y), \dots, f(x_n, y)$ the \defnn{transition values}.  Note that the clusters $c_i^-$ are determined by $f^-$ in \eqref{f-}, and for edges not contained in $R$ (for example if the rectangle is unbounded in the corresponding direction) supplementary clusters and transition points are not defined.    

Let $\vec b$ and $\vec t$ be respectively the lower-left and upper-right corners of $R$. Then $f^+(\vec b)$ and $f^-(\vec t)$ are necessarily defined. The \defnn{height} of the surrectangle is the maximum possible length of a chain of clusters (not necessarily in the image of $f$) from its \defnn{lower cluster} $f^+(\vec b)$ to its \defnn{upper cluster} $f^-(\vec t)$. Surrectangles also inherit descriptions such as open/closed from their underlying rectangle.
\end{definition}

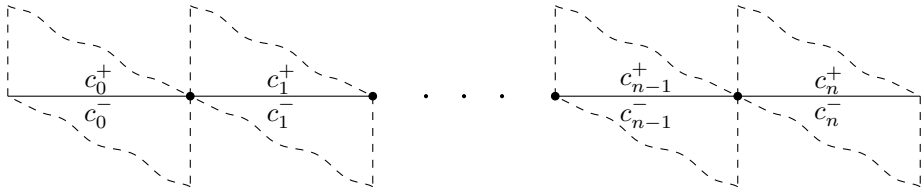
\begin{figure}[H]
\centering
\begin{tikzpicture}[decoration={snake, amplitude=1.5, segment length=30}]
\foreach \x in {0,1}
{
\draw[dashed] (\tikzconst*\x,0)--(\tikzconst*\x,1.2);
\draw[dashed, decorate] (\tikzconst*\x,1.2)--(\tikzconst*\x+\tikzconst,0);
\draw[dashed](\tikzconst*\x+\tikzconst,0)--(\tikzconst*\x+\tikzconst,-1.2);
\draw[dashed, decorate] (\tikzconst*\x+\tikzconst,-1.2)--(\tikzconst*\x,0);
\draw[fill] (\tikzconst*\x+\tikzconst,0) circle [radius=0.05];
\node [above] at (\tikzconst*\x+.5*\tikzconst, -.1) {$c^+_\x$};
\node [below] at (\tikzconst*\x+.5*\tikzconst, 0.1) {$c^-_\x$};
\draw (\tikzconst*\x,0)--(\tikzconst*\x +\tikzconst,0);
}
\foreach \x in {3,4}
{
\draw[dashed] (\tikzconst*\x,0)--(\tikzconst*\x,1.2);
\draw[dashed, decorate] (\tikzconst*\x,1.2)--(\tikzconst*\x+\tikzconst,0);
\draw[dashed](\tikzconst*\x+\tikzconst,0)--(\tikzconst*\x+\tikzconst,-1.2);
\draw[dashed, decorate] (\tikzconst*\x+\tikzconst,-1.2)--(\tikzconst*\x,0);
\draw[fill] (\tikzconst*\x,0) circle [radius=0.05];
\draw (\tikzconst*\x,0)--(\tikzconst*\x +\tikzconst,0);
}
\node [above] at (\tikzconst*3.5, -.1) {$c^+_{n-1}$};
\node [below] at (\tikzconst*3.5, 0.1) {$c^-_{n-1}$};
\node [above] at (\tikzconst*4.5, -.1) {$c^+_n$};
\node [below] at (\tikzconst*4.5, 0.1) {$c^-_n$};
\draw[fill] (2.5*\tikzconst-.5,0) circle [radius=0.02];
\draw[fill] (2.5*\tikzconst,0) circle [radius=0.02];
\draw[fill] (2.5*\tikzconst+.5,0) circle [radius=0.02];
\end{tikzpicture}
\vspace{-10pt}
\caption{A closed edge of a surrectangle}\label{fig:edge}
\end{figure}

\section{Biboundaries}\label{section_biboundary}

\begin{definition}
A \defnn{biboundary} is a partial map $\partial$ on $\{-, +, b, t, l, r,\allowbreak N, S,\allowbreak E, W\}$. It must be defined on $-$ and $+$, and be cluster-valued there. If $\partial$ is defined on $N, S, E$, or $W$, it is bi-trace-valued there, and if defined on $b, t, l$, or $r$, it is $\mcs$-valued there. It is defined on $b$ if and only if it is defined on $S$ and $W$; similarly for $t$ and $N, E$, for $l$ and $N, W$, and for $r$ and $S, E$. The following conditions must also be satisfied.
\begin{enumerate}
\item\label{coherence}
If $\partial(b)$ is defined, then $\partial(b) \leq \partial(-)$ and every future defect of $\partial(b)$ is passed up to $\partial(-)$.

\item
If $\partial(W)$ is defined, then the initial upper cluster of $\partial(W)$ equals $\partial(-)$.

\item
If $\partial(l)$ is defined, then it is less than or equal to the initial upper cluster of $\partial(N)$, and every future defect of $\partial(l)$ is passed up to that cluster.

\item\label{end}
Every future defect of $\partial(+)$ is passed up to the interpolant of the final clusters of $\partial(N)$, to the interpolant of the  final clusters of $\partial(E)$, or to $\partial(t)$, with $\partial$ defined in the places appropriate to the case.

\item
All duals  of \eqref{coherence}--\eqref{end} hold (with the evident meaning of duals). 
\end{enumerate}
Since there are only finitely many maximal consistent sets, clusters, and bi-traces, there are only finitely many biboundaries.  
A biboundary is \defnn{closed} if it is 
defined on $N, S, E$, and $W$ (hence also on $b, l, r$, and $t$).
\end{definition}

Let $s$ be a surrectangle. The biboundary $\partial^s$ determined by $\partial$ is defined in the obvious way.

We now define three types of operations on biboundaries: joins, limits, and shuffles. See \Cref{fig:limit} for visual representations of limits and shuffles.

\begin{definition}
A biboundary $\partial$ is the \defnn{vertical join} of biboundaries $\partial_1$ and $\partial_2$, written $\partial_1 \join_-\partial_2$, if
\begin{itemize}
\item
$\partial_1(N)$ and $\partial_2(S)$ are both defined and are equal,

\item  either 
$\partial_1(W), \; \partial_2(W)$, and $\partial(W)$ are  all defined, $\partial_1(l)=\partial_2(b)$, and $\partial(W)=\partial_1(W)+\partial_1(l)+\partial_2(W)$, or $\partial_1(W), \partial_2(W)$ and $\partial(W)$ are all undefined; similarly for $E$,
\item
$\partial$ agrees with $\partial_1$ on $b, S, r$, and $-$, and with $\partial_2$ on $l, N, t$, and $+$.

\end{itemize}
The diagonal-dual concept is a \defnn{horizontal join}, written $\partial_1 \join_| \partial_2$.\footnote{The $-$ and $|$ subscripts indicate the orientation of the shared edge.}
\end{definition}

\begin{definition}
A biboundary $\partial^*$ is the \defnn{southeastern limit} of a biboundary $\partial_0$ using biboundaries $\partial_1, \partial_2, \partial_3$ if 
\begin{itemize}
\item $\partial_0=(\partial_2\join_|\partial_3)\join_-(\partial_0\join_|\partial_1)$,

\item
 the lower cluster of $\partial_1(E)$ is constantly $\partial_0(+)$,

\item
 the upper cluster of  $\partial_2(S)$ is constantly $\partial_0(-)$, \/

\item $\partial^*$ agrees with  $\partial_0$ over $\set{-, +, l, W, N}$, \/

\item
 $\partial^*(S)$, if defined, is a bi-trace where the upper cluster is constantly $\partial_0(-)$,

\item
 $\partial^*(E)$, if defined, is a bi-trace where the lower cluster is constantly $\partial_0(+)$.
\end{itemize}
 A \defnn{northwestern limit} is defined dually. 

 If $\Delta$ is a set of biboundaries and there are $\partial_0, \partial_1, \partial_2, \partial_3\in\Delta$ such that $\partial^*$ is the southeastern limit of $\partial_0$ using $\partial_1, \partial_2, \partial_3$, then $\partial^*$ is a southeastern limit over $\Delta$. Northwestern limits are dual.
We say that $\partial^*$ is a limit over $\Delta$ if it is either a southeastern or northwestern limit over $\Delta$.
\end{definition}

\begin{definition}\label{shuffle}
Let $\Delta$ be a collection of \emph{closed} biboundaries. The biboundary $\partial'$ is a \defnn{shuffle} of $\Delta$ if there is a \emph{nonempty} set $M \subseteq \mcs$ such that
\begin{enumerate}
\item\label{equal}
if $\partial'(W)$ is defined, then all upper clusters of $\partial'(W)$ equal $\partial'(-)$,

\item\label{shuffle_defects}
every future defect $\F \psi$ of $\partial'(-)$ is passed up to some $m \in M$, or there is a $\partial \in \Delta$ such that $\F \psi$ is passed up to either $\partial(b), \partial(l)$, $\partial(r)$, the interpolant of some cluster pair of $\partial(W)$ or $\partial(S)$, or some transition value of $\partial(W)$ or $\partial(S)$,

\item\label{fits}
for all $\partial \in \Delta$, we have $\partial(t) \leq \partial'(+)$, all future defects of $\partial(t)$ are passed up to $\partial'(+)$, and all upper clusters of $\partial(N)$ equal $\partial'(+)$,

\item\label{one-point}
for all $m \in M$, we have $m \leq \partial'(+)$, and all future defects of every $m$ are passed up to $\partial'(+)$,

\item
all duals of \eqref{equal}, \eqref{shuffle_defects}, \eqref{fits}, and \eqref{one-point} hold.
\end{enumerate}
\end{definition}

Now we are ready to define the biboundaries that we proceed to show are precisely those obtained from surrectangles.

\begin{definition}
A \defnn{ground fabricated biboundary} is a biboundary $\partial$ such that
\begin{enumerate}
\item
$\partial(-) = \partial(+)$,

\item\label{nonempty}
if $\partial(N)$ is defined, then all lower clusters of $\partial(N)$ equal $\partial(+)$,

\item
all duals of \eqref{nonempty} hold.
\end{enumerate}
A \defnn{fabricated biboundary} is either a ground fabricated biboundary, or a biboundary obtained recursively as the join, limit, or shuffle of fabricated biboundaries.
\end{definition}

\section{From fabricated biboundaries to surrectangles}\label{from_biboundary}

In this section we show that every fabricated biboundary is the biboundary obtained from some surrectangle, by describing how to construct such a surrectangle from a given biboundary. We use the recursive structure of fabricated biboundaries as given by their definition.

When we say a function $f$ fills $X$ \defnn{densely} with $M$ we mean that $f(\vec x) \in M$ for all $\vec x \in X$, and for each $m \in M$ the set $f^{-1}(m)$ is dense in $X$. It is clear that if $\mathcal U$ is an open subset of $\mathbb R^2$ and $c$ is a cluster, then there exists $f$ that fills $\mathcal U$ densely with $c$  (and this remains true when restrictions are placed on the behaviour of $f$ outside of $\mathcal U$). 
Similarly when $\mathcal U$ is an open line segment. Further, if for a biboundary $\partial$ we have a surrectangle satisfying $\partial^s = \partial$, we may assume that for every closed edge $e$ of $s$ and each associated cluster pair $c_i^-, c_i^+$ between transition points $\vec x_i, \vec x_{i+1}$, the core map of $s$ fills $(\vec x_i, \vec x_{i+1})$ densely with the interpolant of $c_i^-$ and $c_i^+$. Hence if surrectangles $s_1$ and $s_2$ have a common edge $e$, on which we obtain the same bi-trace and transition points from both surrectangles, then we may assume $s_1$ and $s_2$ agree on $e$.

\begin{lemma}\label{transformation}
Let $s$ be a surrectangle with core map $f$ and let $g_1$ and $g_2$ be order-preserving bijections $\mathbb R \to \mathbb R$. Then there is a surrectangle $s'$ with core map $(x, y) \mapsto f(g_1(x), g_2(y))$, the same supplementary clusters and transition values as $s$, and a transition point $(g_1^{-1}(x), g_2^{-1}(y))$ for every transition point $(x, y)$ of $s$. Moreover, $s'$ yields the same biboundary as $s$.
\end{lemma}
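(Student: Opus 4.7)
The plan is to exploit the fact that an order-preserving bijection $g \from \mathbb R \to \mathbb R$ is automatically a homeomorphism for the order topology: it maps open intervals to open intervals, preserves the strict and reflexive orderings, and commutes with taking suprema and infima of bounded sets. Hence the map $h \from (x,y) \mapsto (g_1(x), g_2(y))$ is an order isomorphism $(\mathbb R^2, \prec) \to (\mathbb R^2, \prec)$ which is also a homeomorphism of $\mathbb R^2$, and it carries the ordering $\triangleleft$ to itself. The inverse $h^{-1}$ has the same properties. I would first use $h^{-1}$ to transport the rectangle: set $R' = h^{-1}(R)$, which is the product of $g_1^{-1}(\pi_1 R)$ and $g_2^{-1}(\pi_2 R)$, both of which remain (unbounded-or-bounded, non-single-point) intervals, so $R'$ is a nondegenerate rectangle. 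Define $f' = f \circ h$; preorder-preservation of $f'$ follows immediately from $h$ preserving $\prec$ together with preorder-preservation of $f$.

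Next I would transfer the supplementary data. For each closed edge $e$ of $R$ with transition points $\vec x_0 < \dots < \vec x_{n+1}$ (along the appropriate axis) and supplementary clusters $c_0^+, \ldots, c_n^+$, the preimage $h^{-1}(e)$ is a closed edge of $R'$ of the same type (horizontal/vertical, upper/lower), with transition points $h^{-1}(\vec x_i)$ in the correct strict order (since $g_1, g_2$ are order-preserving bijections). Assign $s'$ the same supplementary clusters $c_i^+$ (and the same $c_i^-$ obtained from lower edges) on the edge $h^{-1}(e)$. The transition values $b_i = f(\vec x_i) = f'(h^{-1}(\vec x_i))$ coincide.

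The substantive verification is that the surrectangle constraints (3)--(5) transfer. For (3), I would observe that since $h$ is a homeomorphism, open neighbourhoods of $\vec x$ in $R'$ correspond bijectively to open neighbourhoods of $h(\vec x)$ in $R$; hence $(f')^+(\vec x) = f^+(h(\vec x))$ and $(f')^-(\vec x) = f^-(h(\vec x))$ at every point where either side is defined. In particular, on the open segment $(h^{-1}(\vec x_i), h^{-1}(\vec x_{i+1}))$, the function $(f')^-$ is constantly equal to $f^-$ on $(\vec x_i, \vec x_{i+1})$, namely $c_i^-$; and the resulting bi-trace on the edge of $s'$ is literally the same sequence as for $s$. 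For (4), any witnessing point $\vec y \in \up \vec x$ inside $R$ with $\psi \in f(\vec y)$ pulls back to $h^{-1}(\vec y) \in \up h^{-1}(\vec x) \cap R'$ with $\psi \in f'(h^{-1}(\vec y))$, and similarly a boundary witness due north/east transports to a boundary witness due north/east because $h$ preserves the `due north' and `due east' relations (it acts coordinatewise and is order-preserving in each coordinate). Constraint (5) transfers by the temporal dual of this argument, noting again that $h$ preserves the $\triangleleft$-structure and sends boundary to boundary.

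Finally I would note that the biboundary $\partial^{s'}$ is manifestly equal to $\partial^s$: its cluster values $\partial(-) = (f')^+(\vec b') = f^+(\vec b) = \partial^s(-)$ (and dually for $\partial(+)$), the clusters $\partial(b), \partial(t), \partial(l), \partial(r)$ are the values of $f'$ at corners, which equal the corresponding values of $f$, and each bi-trace $\partial(N), \partial(S), \partial(E), \partial(W)$ is an interleaving of the same sequences of supplementary clusters, derived $c_i^-$'s, and transition values that define $\partial^s$ on the corresponding edge. I expect the only delicate point to be verifying $(f')^\pm = f^\pm \circ h$ carefully, since this underpins both the bi-trace condition on the edges and the existence of the lower and upper clusters of the surrectangle; all other clauses are then purely bookkeeping consequences of $h$ being an order-isomorphism homeomorphism.
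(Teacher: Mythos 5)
Your proposal is correct: the paper omits this proof as routine, and the intended argument is exactly the coordinatewise change of variables you give, using that an order-preserving bijection of $\mathbb R$ is automatically a homeomorphism, so that $h(x,y)=(g_1(x),g_2(y))$ is an order isomorphism and homeomorphism transporting the rectangle, the edge data, and the conditions $(f')^\pm = f^\pm \circ h$, whence $\partial^{s'}=\partial^s$. No gaps; your identification of $(f')^\pm = f^\pm\circ h$ as the only point needing care is exactly right.
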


The proof of Lemma~\ref{transformation} is routine, and omitted.
The proofs of the following four  lemmas may be found in the appendix. Figure~\ref{fig:limit} illustrates the proofs of the last two.
\begin{lemma}[ground biboundaries]\label{closed:ground}
Let $\partial$ be a ground fabricated biboundary. Then there exists a surrectangle $s$ such that $\partial^s = \partial$.
\end{lemma}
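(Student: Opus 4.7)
The plan is to construct the surrectangle $s$ directly from $\partial$. Let $c := \partial(-) = \partial(+)$. Take the underlying rectangle $R$ to be a nondegenerate rectangle of the form $I_x \times I_y$ with $I_x, I_y$ subintervals of $[0,1]$, arranged so that each of the four edges (and the corresponding corners) is included in $R$ exactly when the corresponding entry of $\partial$ is defined. Define the core map $f \from R \to \mcs$ as follows: on the open interior of $R$, fill densely with the cluster $c$, so that for each $m \in c$ the preimage $f^{-1}(m)$ is dense there. For each closed edge $e$ corresponding to a direction $D$, with bi-trace $\partial(D) = (c_0^-, c_0^+, b_1, \dots, c_n^+)$, place $n$ transition points equispaced on $e$, set $f$ equal to $b_i$ at the $i$th transition point, and densely fill each of the $n+1$ open segments with the interpolant of the corresponding cluster pair $(c_i^-, c_i^+)$ (nonempty by the bi-trace axiom). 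At each closed corner, set $f$ equal to the explicit value $\partial(b)$, $\partial(l)$, $\partial(r)$, or $\partial(t)$. Take the supplementary clusters of each closed edge to be its upper clusters $c_i^+$ (for upper edges) or lower clusters $c_i^-$ (for lower edges), with the transition points as placed.

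Verification proceeds condition by condition. For preorder preservation of $f$, the ground-fabricated property forces all inner clusters of every edge bi-trace to equal $c$, so each interpolant lies on the correct side of $c$ under $\leq$; the transition values obey the bi-trace inequality $c_i^- \leq b_{i+1} \leq c_{i+1}^+$; and the corner comparisons follow from biboundary axioms such as $\partial(b) \leq c \leq \partial(t)$ (from \eqref{coherence} and its temporal dual) together with the diagonal analogues for $\partial(l)$ and $\partial(r)$. The supplementary segment data on each edge matches $\partial(D)$ by construction, and $f^-$ (respectively $f^+$) is constantly $c$ on open segments of upper (respectively lower) edges because the adjacent interior is densely $c$.

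The main technical step is verifying the future-defect condition \eqref{no_defects} (its past-defect dual being symmetric). For an interior $\vec x$ with $\F\psi \in f(\vec x) \in c$: if $\psi$ belongs to some $m \in c$, the density of the interior fill provides an internal witness $\vec y \succ \vec x$ with $f(\vec y) = m$. Otherwise $\F\psi$ is a genuine future defect of $c = \partial(+)$, and biboundary axiom \eqref{end} guarantees that it is passed up to the interpolant of a final cluster pair of $\partial(N)$, to that of $\partial(E)$, or to $\partial(t)$. The construction places these along the rightmost open segment of the north edge, the topmost open segment of the east edge, and at the NE corner, so a due-north or due-east boundary witness is available for each interior point. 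Edge points and transition points are handled by a short case analysis on the bi-trace, using the fact that all future defects of a transition value $b_i$ are passed up to $c_i^+$ and hence to the adjacent interpolant values slightly further along the edge.

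The main obstacle is the bookkeeping at the corners, especially the NE corner when $\partial(t)$ is defined, since no point of $R$ lies strictly due north or due east of $(1,1)$ and $\up(1,1) \cap R = \emptyset$, so the generic recipe above cannot directly produce a witness at the corner itself. Here I rely on the constraint $\partial(+) \leq \partial(t)$ coming from the temporal dual of \eqref{coherence}: any $\F\psi \in \partial(t)$ already lies in $c$, and combined with the placement of such formulas along the final segments of the north or east edge via axiom \eqref{end}, this allows resolution of $\partial(t)$'s future obligations through nearby interpolant and transition values. The SW corner and the past-defect dual are treated symmetrically.
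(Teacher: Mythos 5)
Your construction coincides with the paper's: take a rectangle with interior $(0,1)^2$ and closed edges exactly where $\partial$ is defined, fill the interior densely with $\partial(+)=\partial(-)$, place evenly spaced transition points on each closed edge with transition values from the bi-trace and interpolant fillings between them, and assign $\partial(b),\partial(l),\partial(r),\partial(t)$ at the corners; the paper leaves the verification as routine, and your detailed check is essentially correct. The NE-corner ``obstacle'' you single out is in fact a non-issue: for any boundary point $\vec x$ with $\F\psi\in f(\vec x)$, the ``passed upwards'' clause of the surrectangle definition is trivially satisfied by $\vec y=\vec x$ itself, since $\F\psi$ is passed up to $f(\vec x)$ whenever $\F\psi\in f(\vec x)$, so the appeal to \eqref{end} and nearby interpolants is unnecessary there.
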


\begin{lemma}[joins]\label{closed:join}
Let $\partial_1$ and $\partial_2$ be biboundaries such that the vertical join $\partial_1 \join_-\partial_2$ exists, and suppose there exist surrectangles $s_1$ and $s_2$ with $\partial^{s_1} = \partial_1$ and $\partial^{s_2} = \partial_2$. Then there exists a surrectangle $s$ such that $\partial^s = \partial_1 \join_-\partial_2$.

Similarly for horizontal joins.
\end{lemma}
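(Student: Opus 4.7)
My plan is to construct $s$ by stacking $s_1$ underneath $s_2$ along the shared horizontal edge $\partial_1(N) = \partial_2(S)$. First I apply \Cref{transformation} to translate and rescale $s_1$ and $s_2$ so that (i) their rectangles have a common horizontal extent (the vertical-join conditions guarantee that the $W$-edges, and similarly the $E$-edges, are either both closed or both open, so rescaling can make them coincide), (ii) the top of $R_1$ and the bottom of $R_2$ both lie on the line $y=0$, and (iii) the transition points on the shared edge coincide. Since $\partial_1(N) = \partial_2(S)$ then supplies the same bi-trace with matching transition values at matched transition points, the remark preceding this lemma lets me further assume $f_1$ and $f_2$ agree pointwise on the shared edge. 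I then set $R = R_1 \cup R_2$ and $f = f_1 \cup f_2$, taking the south-edge data from $s_1$, the north-edge data from $s_2$, and the west (respectively east) edge data to be the concatenation $\partial_1(W) + \partial_1(l) + \partial_2(W)$ (respectively its east-side counterpart) prescribed by the join.

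Next I verify the surrectangle axioms for $s$. Preorder preservation is inherited on each of $R_1$ and $R_2$; for $\vec a \in R_1$ and $\vec b \in R_2$ with $\vec a \prec \vec b$, any point $\vec c$ on the shared edge whose $x$-coordinate lies strictly between those of $\vec a$ and $\vec b$ lies in $R$ and satisfies $\vec a \prec \vec c \prec \vec b$, giving $f(\vec a) \lesssim f(\vec c) \lesssim f(\vec b)$ by transitivity of $\lesssim$. The south and north edge conditions are inherited from $s_1$ and $s_2$; the east and west bi-traces are bi-traces by the definition of the $+$ operation on bi-traces, using the biboundary axioms which guarantee that the shared corner sits appropriately as a transition value. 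For the defect condition, take $\F\psi \in f(\vec x)$ with $\vec x \in R_1$: $s_1$'s surrectangle property either resolves $\F\psi$ internally in $R_1$ (carrying over to $R$), passes it up to a boundary point $\vec y$ of $R_1$ due east of $\vec x$ (which remains a valid east-boundary witness in $R$), or passes it up to a boundary point $\vec y$ due north of $\vec x$. In the last subcase, $\vec y$ lies on the now-interior shared edge with $\psi \in f(\vec y)$ or $\F\psi \in f(\vec y)$. If $\F\psi \in f(\vec y)$, then $s_2$'s surrectangle property applied at $\vec y$ delivers the required resolution in $R$. If $\psi \in f(\vec y)$ and $\vec y$ is not a transition point, then by the density assumption, points strictly east of $\vec y$ on the shared edge also take the value $f(\vec y)$, giving an internal witness in $\up\vec x$. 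Past defects and defects at points of $R_2$ are handled by temporal- and temporal-diagonal-dual arguments. Finally, $\partial^s = \partial_1 \join_- \partial_2$ holds component-by-component by construction, and the horizontal-join statement follows by the diagonal-dual of the preceding argument.

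The main obstacle is the subcase in which $\F\psi$ is passed up from $\vec x \in R_1$ to a transition point $\vec y$ on the shared edge with $\psi \in f(\vec y)$ but $\F\psi \notin f(\vec y)$: here $s_2$'s surrectangle axiom at $\vec y$ does not directly apply to $\F\psi$, and the density argument available at non-transition points fails because the transition value $b_i = f(\vec y)$ is only realised at the isolated point $\vec y$. Handling this case calls for a careful use of the biboundary conditions linking $b_i$ to its adjacent cluster pairs together with the interpolant-density structure of $f_2$ in the open region of $R_2$ immediately above the shared edge.
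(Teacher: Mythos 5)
Your construction is exactly the paper's: align the two surrectangles with Lemma~\ref{transformation}, use the density remark to make them agree on the shared edge, take the union, and check the surrectangle conditions (the paper itself dismisses that check as straightforward). So the approach is not in question; the problem is the case you yourself flag and leave open, and that case is a genuine gap, not a loose end that ``careful use of the biboundary conditions'' will close. Indeed the tools you name cannot close it: if $\vec y$ is a transition point of the shared edge with value $b_i$, $\psi \in b_i$ and $\F\psi \notin b_i$, then the bi-trace condition $b_i \leq c_i^+$ (the upper cluster of the segment to its east) together with $\F\psi \notin b_i$ forces $\psi \notin n$ for every $n$ in that cluster, and preorder preservation in $s_2$ forces $\psi$ to be absent from \emph{every} point of $R_2$ strictly northeast of $\vec y$. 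Hence the only candidate witnesses for the defect at $\vec x$ in the union, beyond those $s_1$ already had, are occurrences of $\psi$ on the shared edge strictly east of $\vec y$ or a pass-up at the north edge of $R_2$ due north of $\vec x$ (or the east edge of $R_1$) --- and nothing in the hypotheses guarantees any of these. One can realise this concretely: with $p,\F p,\P p$ in the closure, take $b_i=\{p,\neg\F p,\neg\P p\}$, fill the part of $R_1$ weakly west of the transition point with $\{\neg p,\F p,\neg\P p\}$ and the part strictly east with $\{\neg p,\neg\F p,\neg\P p\}$, and let $s_2$ satisfy the $\F p$'s of its western region only at points on the vertical line through $\vec y$; both pieces satisfy every surrectangle condition, their north/south bi-traces agree, yet in the union the points due south of $\vec y$ have the defect $\F p$ with no internal witness and (for open side edges) no boundary point to pass it to.

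The upshot is that the naive union of the \emph{given} $s_1$ and $s_2$ need not be a surrectangle, so the missing case cannot be finished by a sharper check; the construction itself has to be adjusted. The natural repair uses the freedom the lemma gives you: $\partial^{s_1}$ does not record the core map on the vertical line beneath a transition point (nor, dually, on the horizontal line to its west), so one may first modify $s_1$ on those null sets (and symmetrically $s_2$, and likewise for past defects) so that no point's defect relies on a transition value for its pass-up, taking care that no other point loses an internal witness in the process; only then take the union. Your instinct that this is the crux of the lemma is right, and your density argument for non-transition points is correct, but as written the proof stops exactly where the real work is, and the hint you give about how to proceed points in a direction that provably cannot succeed without changing the surrectangles being glued. (A similar, smaller issue lurks in your $\F\psi \in f(\vec y)$ subcase: $s_2$ may pass that defect due east along the shared edge to a point that is northeast, but not due north or due east, of $\vec x$, so that chain of pass-ups also needs an explicit argument.)
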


\begin{lemma}[limits]\label{closed:limit}
Let $\partial^*$ be a southeastern limit of $\partial_0$ using $\partial_1, \partial_2, \partial_3$, and suppose there are surrectangles $s_0, s_1, s_2, s_3$ such that $\partial^{s_i}=\partial_i$, for $i<4$.  Then there exists a surrectangle $s^*$ such that  $\partial^{s^*} = \partial^*$. Similarly for  northwestern limits.
\end{lemma}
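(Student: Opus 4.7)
The plan is to construct $s^*$ by an infinite fractal-style construction based on the self-similar decomposition $\partial_0 = (\partial_2 \join_| \partial_3) \join_- (\partial_0 \join_| \partial_1)$. This equation presents $\partial_0$ as a $2 \times 2$ arrangement with $\partial_0$ in the NW quadrant and $\partial_1, \partial_2, \partial_3$ in NE, SW, SE respectively. In $s^*$, a copy of $s_0$ occupies the NW region (providing the NW-related biboundary data), while $s_1, s_2, s_3$ fill NE, SW, SE. Applying the same decomposition recursively within the SE sub-region creates a nested sequence of $s_0, s_1, s_2, s_3$ placements whose scales shrink to $0$, accumulating at the SE corner $\vec q$ of $s^*$'s rectangle. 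The constant-cluster hypotheses on $\partial_2(S)$ and $\partial_1(E)$ are what ensure that the S and E edges of $s^*$ ultimately have bi-traces with constant upper cluster $\partial_0(-)$ and lower cluster $\partial_0(+)$ respectively.

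First I would rescale $s_0, s_1, s_2, s_3$ as needed via Lemma~\ref{transformation} and fix a rectangle $R^*$ for $s^*$ with SE corner $\vec q$. Form a descending chain of sub-rectangles $R^* = Q_0 \supsetneq Q_1 \supsetneq \cdots$ all sharing $\vec q$ as their SE corner, with NW corners of $Q_n$ converging to $\vec q$. In each L-shaped region $Q_n \setminus Q_{n+1}$, place scaled copies of $s_0, s_1, s_2$ in the NW, NE, SW sub-quadrants of $Q_n$, assembled by repeated application of Lemma~\ref{closed:join}; edge-matching between sub-quadrants at the same scale is guaranteed by the self-similarity equation, and edge-matching between successive scales is guaranteed by the constant-cluster hypotheses on $\partial_2(S)$ and $\partial_1(E)$. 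At $\vec q$ assign an appropriate MCS value consistent with the local cluster structure. I would then verify $\partial^{s^*} = \partial^*$: agreement on $\{-, +, l, W, N\}$ follows from the outermost $s_0$ copy together with the fixed-point relations $\partial_0(N) = \partial_0(N) + \partial_0(t) + \partial_1(N)$ and its vertical dual, which force the infinite contributions from outside $s_0$ to collapse into the expected bi-traces $\partial_0(N)$ and $\partial_0(W)$; the S and E edges are built up from $\partial_2(S)$ and $\partial_1(E)$ pieces across all scales, and the constant-cluster hypotheses guarantee that the resulting bi-traces have constant upper/lower cluster $\partial_0(-)$ resp.\ $\partial_0(+)$ as required. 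Defects in each $s_i$ piece are inherited from that surrectangle; defects emerging at $\vec q$ are passed outward through the accumulating finer pieces to reach the S or E edges; S- and E-edge defects are passed up to the constant clusters.

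The main obstacle is the analysis at the accumulation point $\vec q$, where infinitely many pieces at arbitrarily small scales meet. Verifying preorder preservation there requires using the constant-cluster hypotheses to show that successive scaled copies of $s_1, s_2$ take values in clusters $\lesssim$-compatible with the value assigned to $\vec q$; verifying defect resolution requires tracking how future and past defects generated near $\vec q$ propagate outward to be eventually resolved on the S or E edges or within some piece. The northwestern limit is proved by the temporal dual of this argument, with the accumulation instead occurring at the NW corner of the rectangle.
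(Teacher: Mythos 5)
Your overall geometric intuition---an infinite construction with pieces accumulating at the SE corner, driven by the self-similarity $\partial_0 = (\partial_2 \join_| \partial_3) \join_- (\partial_0 \join_| \partial_1)$, with the constant-cluster hypotheses controlling the eventual $S$ and $E$ edges---matches the paper. But the actual tiling you describe does not fit together. You place scaled copies of $s_0$, $s_1$, $s_2$ in the NW, NE, SW sub-quadrants of each $Q_n$, so $s_0$ recurs infinitely often along the main diagonal and $s_3$ is never placed at all. The self-similarity equation puts $\partial_3$ in the SE quadrant, so recursing into the SE means that the SE sub-region should carry $\partial_3$-type boundary data, not $\partial_0$-type (equivalently not $\partial^*$-type, since $\partial^*(W)=\partial_0(W)$). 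Concretely, the west edge of $Q_1$ must match $\partial_2(E)$, and from the self-similarity $\partial_2(E)=\partial_3(W)$; but in your layout the top of that edge abuts the copy of $s_0$ inside $Q_1$, whose west edge is $\partial_0(W)$. Nothing forces $\partial_3(W)=\partial_0(W)$, so the join $\partial_2 \join_| (\text{contents of } Q_1)$ need not exist. Similarly, the south edge of $s_1\restr{Q_0}$ is $\partial_1(S)=\partial_3(N)$, which must match the north edges of the pieces inside $Q_1$, which begin with $\partial_0(N)$; again $\partial_3(N)=\partial_0(N)$ is not implied. The constant-cluster hypotheses concern $\partial_1(E)$ and $\partial_2(S)$ and have no bearing on these interior seams.

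The paper's construction avoids this. The copy of $s_0$ appears exactly \emph{once}, in the NW quadrant. Then for each $k\geq 1$ an L-shaped rim is added consisting of a horizontal strip copy of $s_2$ below, a vertical strip copy of $s_1$ to the right, and a small square copy of $s_3$ in the corner; the rims wrap around the growing NW region and accumulate at the SE corner. After each wrapping, the covered region again carries the boundary data $\partial_0$, by the self-similarity equation, so the next rim attaches cleanly. The key pieces converging to the SE corner are the $s_3$ squares, not $s_0$ squares. If you replace your $s_0$-on-the-diagonal layout with the paper's strip-plus-$s_3$-square rims, the rest of your outline (matching $\{-,+,l,W,N\}$ via the fixed-point bi-trace identity, and arguing the $S$ and $E$ edges from the constant-cluster hypotheses) goes through as you describe.
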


\begin{figure}[H]
\centering
\begin{tikzpicture}[scale=.5]
\draw(0, 0) --(0, 8)--(8,8)--(8, 0)--(0, 0);

\draw[dashed] (0, 4)--(4,4)--(4, 8);

\node at (2, 6) {$s_0$};

\draw[dashed] (0,2)--(4, 2)--(4, 4); \node at (2, 3) {$s_2$};
\draw[dashed] (4,4)--(6, 4)--(6, 8)--(4, 8); \node at (5, 6) {$s_1$};
\draw[dashed] (4, 2)--(6, 2)--(6, 4); \node at (5, 3) {$s_3$};

\draw[dashed] (0, 1)--(6,1)--(6,2); \node at (3, 1.5) {$s_2$};
\draw[dashed] (6,1)--(7,1)--(7, 8)--(6, 8);\draw[dashed](6, 2)--(7, 2);
\node at (6.5, 5) {$s_1$};
\node at (6.5, 1.5) {$s_3$};

\draw (0, 4)--(0,0)--(8, 0)--(8, 8)--(4, 8);

\node[above] at (0, 8) {$\vec l$};
\node[below] at (8, 0){$\vec r$};

\draw[dotted, very thick] (7.1,.9)--(7.9,0.1);

\node at (.9, .3) {$\partial_0(-)$};
\node at (5, .3) {$\partial_0(-)$};
\node[rotate=90] at(7.7, 7){$\partial_0(+)$};
\node [rotate =90]at(7.7, 3){$\partial_0(+)$};

\node at (4, -1) {(a)};
\end{tikzpicture}
\hspace{.5in}
\begin{tikzpicture}[scale=.5]
\newcommand\Square[1]{+(-#1,-#1) rectangle +(#1,#1)}
\draw (0,0) -- (8,0) --(8,8) --(0,8) -- (0, 0);
\node[above] at (0, 8) {$\vec l$};
\node[below] at (8, 0){$\vec r$};

\draw (1.2,5.2) rectangle (2.8,6.8);

\node at (2, 6) {$s_{\partial_0}$};

\draw(3.1, 3) rectangle (5, 4.9);
\node at (4, 4) {$s_{\partial_1}$};

\draw (5.8,.8) rectangle (7.2, 2.2);
\node at (6.5, 1.5) {$s_{\partial_0}$};

\draw [fill] (.5,7.5) circle [radius=1.5pt];
\draw [fill] (1,7) circle [radius=1.5pt];
\draw [fill] (2.9,5.1) circle [radius=1.5pt];
\draw [fill] (5.5,2.5) circle [radius=1.5pt];
\draw [fill] (5.3,2.7) circle [radius=1.5pt];
\draw [fill] (5.1,2.9) circle [radius=1.5pt];
\node  [right] at   (5.5,2.5)  {$m$};
\draw [fill] (7.5,.5) circle [radius=1.5pt];

\node at (2,2) {$\partial(-)$};
\node at (6,6) {$\partial(+)$};

\node at (4, -1) {(b)};

\end{tikzpicture}
\vspace{-5pt}
\caption{(a) A surrectangle for a southeastern limit of $\partial_0$ using $\partial_1, \partial_2, \partial_3$, and (b) a surrectangle for a shuffle of $\partial_0, \partial_1,\ldots $\label{fig:limit}}
\end{figure}

\begin{lemma}[shuffles]\label{closed:shuffle}
Let $\partial'$ be a shuffle of $\Delta$, and suppose that for all $\partial \in \Delta$ there is a surrectangle $s_\partial$ with $\partial^{s_\partial} = \partial$. Then there is a surrectangle $s'$ with $\partial^{s'} = \partial'$.
\end{lemma}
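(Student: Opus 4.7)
The plan is a density construction along the lines suggested by Figure~\ref{fig:limit}(b). By Lemma~\ref{transformation} I may work on the closed square $R = [0,1]^2$, restricting to those edges for which the corresponding direction of $\partial'$ is defined. I split $R$ using the antidiagonal $A = \{(x, 1-x) : 0 < x < 1\}$ into two open triangular regions $L = \{(x,y) \in R : x+y<1\}$ and $U = \{(x,y) \in R : x+y>1\}$. The key point is that $A$ is a $\prec$-antichain, so small sub-rectangles placed around a set of points in $A$ can be chosen to remain pairwise $\prec$-incomparable.

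First I choose a dense countable $\prec$-antichain $D \subseteq A$ (for instance the rational-coordinate points of $A$), together with an assignment $\tau \colon D \to \Delta \cup M$ such that for each $e \in \Delta \cup M$ the fibre $\tau^{-1}(e)$ is dense in $A$. For each $d \in D$ with $\tau(d) = \partial \in \Delta$, I use Lemma~\ref{transformation} to realise $s_\partial$ on a small closed sub-rectangle $R_d$ centred at $d$; for each $d \in D$ with $\tau(d) = m \in M$, I simply set the core map at $d$ to be $m$. I enumerate $D$ and choose the $R_d$ along this enumeration with diameters shrinking fast enough that the collection $\{R_d\}$ is pairwise disjoint and pairwise $\prec$-incomparable, which is possible because $D$ itself is a $\prec$-antichain. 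Next I fill $L \setminus \bigcup_d R_d$ densely with the cluster $\partial'(-)$ (meaning every member of the cluster is attained on a dense subset), and dually fill $U \setminus \bigcup_d R_d$ densely with $\partial'(+)$.

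For each edge of $R$ on which $\partial'$ is defined, I lay the corresponding bi-trace along that edge in the evident way: transition points of the bi-trace become transition points of the edge, the open intervals between them are filled densely with the appropriate interpolant, and the corner values $\partial'(b), \partial'(l), \partial'(r), \partial'(t)$ are placed at the corners where they are defined.

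It then remains to verify that the resulting data form a surrectangle $s'$ with $\partial^{s'} = \partial'$. Preorder-preservation within any $R_d$ is inherited from $s_\partial$; between distinct sub-rectangles it is vacuous by $\prec$-incomparability; cross-region comparisons reduce to chains through $\partial'(-)$ and $\partial'(+)$ that close using the shuffle axioms~\eqref{fits} and~\eqref{one-point} and their duals, which guarantee that every value placed lies between $\partial'(-)$ and $\partial'(+)$; comparisons involving the outer boundary of $R$ use the biboundary coherence axioms. Defect resolution splits into three cases: a defect at a point of $L$ is either resolved by $\prec$-reaching the antidiagonal, where by shuffle axiom~\eqref{shuffle_defects} every relevant element of $\Delta \cup M$ occurs densely, or it escapes to the east or north edge of $R$; a defect inside some $R_d$ is handled internally by $s_\partial$ or passed up through $\partial(t), \partial(r), \partial(N),$ or $\partial(E)$, whose defects are in turn passed up to $\partial'(+)$, which densely fills $U$ immediately above $R_d$; a defect at a point of $U$ reaches the east or north boundary of $R$. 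The main obstacle is the bookkeeping in the first step: arranging the dense selection on $A$ so that each element of $\Delta \cup M$ appears densely while the surrounding sub-rectangles remain disjoint and pairwise $\prec$-incomparable. This is a routine but delicate interleaving argument, and is the most subtle aspect of the construction.
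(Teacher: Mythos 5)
Your overall picture (split the square along the antidiagonal, fill below densely with $\partial'(-)$ and above with $\partial'(+)$, plant pairwise $\prec$-incomparable copies of the $s_\partial$ and single points carrying elements of $M$ along the diagonal, and treat the outer edges via the bi-traces) is the same as the paper's. But the step you dismiss as ``routine but delicate interleaving'' is not delicate --- it is impossible as you have set it up, and this is a genuine gap. You first fix a countable set $D$ dense in the antidiagonal $A$, with the fibre $\tau^{-1}(\partial)$ dense for every $\partial \in \Delta$, and then ask for pairwise disjoint \emph{nondegenerate} rectangles $R_d$ centred at the points $d$ with $\tau(d) \in \Delta$. No choice of shrinking diameters can achieve this: each such $R_d$ meets $A$ in a subsegment of positive length, which by density contains another point $d' \in \tau^{-1}(\Delta)$; since $d' \in R_{d'} \cap R_d$, the rectangles $R_d$ and $R_{d'}$ intersect. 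So the family of centres cannot be chosen in advance to be dense; the density of each element of $\Delta \cup M$ along $A$ and the disjointness of the host rectangles pull in opposite directions once the centres are fixed. The paper resolves exactly this tension by \emph{not} fixing locations beforehand: it runs a middle-thirds recursion on subsegments of the diagonal, at each stage either installing a transformed copy of some $s_\partial$ on the closed rectangle whose diagonal is the central third of a surviving segment, or placing a single value $m \in M$ at the segment's midpoint, and scheduling so that inside every segment that ever arises every element of $\Delta \cup M$ is eventually installed. This yields countably many pairwise disjoint, pairwise $\prec$-incomparable copies whose union meets every subsegment of $A$, which is all the defect-resolution argument needs: every point below the diagonal that is never overwritten has a copy of each $s_\partial$ and each $m \in M$ in its future.

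A secondary omission: the core map must be total on $[0,1]^2$, but your regions $L$, $U$ and the sets $R_d$, $D$ do not cover the antidiagonal, so the points of $A$ outside $\bigcup_d R_d \cup D$ are left unassigned. They cannot in general carry $\partial'(-)$ or $\partial'(+)$; the paper first fills all of $d$ with an element of $M$ (nonempty by Definition~\ref{shuffle}) before the recursive overwriting, and conditions \eqref{one-point} and its dual are what make those diagonal points legal, with their defects passed up to $\partial'(+)$ and down to $\partial'(-)$. Your verification sketch (incomparability within the diagonal band, preorder via the shuffle ordering conditions, defects escaping to the north/east edges or resolved by copies in the future) is otherwise in line with the paper's, but it relies on the placement step that, as written, cannot be carried out.
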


By Lemma~\ref{closed:ground}, the set $\Delta$ of biboundaries that can be obtained from surrectangles contains the ground fabricated biboundaries. By Lemma~\ref{closed:join}, Lemma~\ref{closed:limit}, and Lemma~\ref{closed:shuffle}, it is closed under joins, limits, and shuffles. Hence $\Delta$ contains all fabricated biboundaries. This proves, as promised, the following.

\begin{lemma}\label{section4summary}
Let $\partial$ be a fabricated biboundary. Then there exists a surrectangle $s$ such that $\partial^s = \partial$.
\end{lemma}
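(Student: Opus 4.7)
The plan is to prove Lemma~\ref{section4summary} by a straightforward structural induction on the way the fabricated biboundary $\partial$ is constructed according to its recursive definition. Since the class of fabricated biboundaries is defined to be the smallest class containing the ground fabricated biboundaries and closed under joins, limits, and shuffles, it suffices to verify that the set
\[ \Delta := \{\partial \mid \partial = \partial^s \text{ for some surrectangle } s\} \]
contains all ground fabricated biboundaries and is closed under the three operations.

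For the base case, I would invoke Lemma~\ref{closed:ground}, which directly gives that every ground fabricated biboundary belongs to $\Delta$. For the inductive step, I would split into three cases according to which operation was last applied in the construction of $\partial$. If $\partial = \partial_1 \join_- \partial_2$ (or its horizontal dual), then by induction hypothesis there exist surrectangles $s_1, s_2$ realising $\partial_1, \partial_2$, and Lemma~\ref{closed:join} supplies the desired surrectangle for $\partial$. The limit case is handled identically using Lemma~\ref{closed:limit}, noting that the four ``ingredient'' biboundaries $\partial_0, \partial_1, \partial_2, \partial_3$ appearing in the southeastern limit construction are all fabricated (being the biboundaries over which the limit is taken), hence by induction are each realised by some surrectangle. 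The shuffle case follows analogously from Lemma~\ref{closed:shuffle}: the members of the set $\Delta$ appearing in the shuffle construction are, by hypothesis, all fabricated and therefore realisable.

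There is no genuine obstacle in this lemma itself, as the paragraph immediately preceding the statement essentially carries out the proof; the real work is entirely hidden in Lemmas~\ref{closed:ground}--\ref{closed:shuffle}. The only subtlety worth flagging is to make sure the induction is formulated on the derivation tree (or well-founded rank) of $\partial$ as a fabricated biboundary, rather than on some ill-defined notion of ``complexity'' of $\partial$, since a given biboundary could in principle be fabricated in more than one way. Once that is pinned down, the argument is a two-line formal induction, so I would write the proof as little more than a citation of the four prior lemmas combined with the recursive definition.
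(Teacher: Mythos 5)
Your proposal is correct and is essentially identical to the paper's own argument: the paper proves the lemma by observing (via Lemmas~\ref{closed:ground}, \ref{closed:join}, \ref{closed:limit}, and \ref{closed:shuffle}) that the set of biboundaries of the form $\partial^s$ contains the ground fabricated biboundaries and is closed under joins, limits, and shuffles, hence contains all fabricated biboundaries. Your remark about grounding the induction on the derivation (or equivalently using the closure/smallest-class formulation) is a sensible clarification but does not differ in substance from what the paper does.
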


\section{From surrectangles to fabricated biboundaries}\label{from_surrectangle}

In this section we show that every biboundary obtained from a surrectangle is a fabricated biboundary (Lemma~\ref{thm:surr}). We do this by induction on the height of the surrectangle.

If a surrectangle $s$ has height $0$, then the upper and lower clusters of $\partial^s$ are equal, and from that it is easy to see that all conditions for $\partial^s$ to be a ground fabricated biboundary are satisfied.

In the remainder of this section, we assume that for every surrectangle $s$ of height no greater than $N$, the biboundary $\partial^s$ is fabricated, and aim to prove the statement for all surrectangles of height $N+1$.
First, note that by Lemma~\ref{transformation}, we only need prove the result for surrectangles with bounded domains. So henceforth we take bounded domains as an assumption. 

Let $s$ be a surrectangle with domain $R$, core map $f$, northwest corner $\vec l$, southeast corner $\vec r$, lower cluster $c^-$, and upper cluster $c^+$.  Let $I^-=f^{-1}(c^-)$ be the subset of $R$ that maps to the lower cluster, and $I^+=f^{-1}(c^+)$. Define $\Gamma^-$ to be the \emph{closure} (in $\mathbb R^2$) of the intersection of the boundary of $I^-$ with  the interior of $R$. Define $\Gamma^+$ similarly. Elementary topology shows that $\Gamma^-$ and $\Gamma^+$ are homeomorphic to closed (proper) line segments, meeting the boundary of $R$ only at their endpoints, and linearly ordered by $\triangleleft$; see \cite[Lemmas 2.10 and 2.11]{hrminkowski} for details. The notation $s\restr{[\vec x, \vec y]}$ signifies the surrectangle formed in the obvious way by restriction of $s$ to $[\vec x, \vec y] \cap R$ (assuming this is nondegenerate). It is straightforward to check that $s\restr{[\vec x, \vec y]}$ is indeed a surrectangle.

As before, proofs of lemmas are contained in the appendix.

\begin{lemma}\label{lem:disjoint}
If $\Gamma^-$ and $\Gamma^+$ are disjoint then $\partial^s$ is fabricated.
\end{lemma}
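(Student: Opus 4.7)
The plan is to decompose $R$ into a finite grid of axis-aligned sub-rectangles, each giving rise to a surrectangle of height at most $N$ (hence a fabricated biboundary by the inductive hypothesis), and then to reassemble $\partial^s$ by iterated application of Lemma~\ref{closed:join}. By Lemma~\ref{transformation} I may assume $R$ is bounded.

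Since $\Gamma^-$ and $\Gamma^+$ are disjoint compact subsets of $\mathbb{R}^2$, they are separated by some positive Euclidean distance $\epsilon > 0$. I choose finitely many horizontal and vertical cut lines so that every resulting cell has diameter strictly less than $\epsilon$, perturbing them slightly, if necessary, to avoid any pre-existing transition point of $s$. Because $f^+$ and $f^-$ are monotone and take only finitely many values along any horizontal or vertical line, each cut line carries a well-defined bi-trace and the resulting restrictions $s\restr{C}$ are genuine sub-surrectangles. No cell $C$ meets both $\Gamma^-$ and $\Gamma^+$, since $\mathrm{diam}(C) < \epsilon \leq d(\Gamma^-,\Gamma^+)$.

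Fix such a cell $C$. Since $I^-$ is a $\prec$-lower set whose boundary in the interior of $R$ is $\Gamma^-$, and $C$ is connected, $I^- \cap C$ is either empty or all of $C$ whenever $C$ misses $\Gamma^-$; similarly for $I^+$ and $\Gamma^+$. Combining with the fact that each cell misses at least one of $\Gamma^\pm$, one of the following holds for $C$: $C \subseteq I^-$ or $C \subseteq I^+$ (in which case $s\restr{C}$ is constant on a single cluster, so has height $0$); $C \cap I^- = \emptyset$ (so the lower cluster of $s\restr{C}$ is strictly above $c^-$); or $C \cap I^+ = \emptyset$ (so the upper cluster is strictly below $c^+$). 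In every case the sub-surrectangle has height at most $N$, so its biboundary is fabricated.

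Finally, I recover $\partial^s$ by first joining the cells of each horizontal strip along their shared vertical edges, then joining the resulting row-biboundaries along shared horizontal edges, applying Lemma~\ref{closed:join} at each step; compatibility at every shared edge is automatic because both sides inherit the same bi-trace from $f$. The main obstacle is the technical verification that the grid cuts produce valid sub-surrectangles with compatible bi-traces at the new edges: one has to check that the supplementary clusters, transition points, and transition values read off each cut line really do satisfy the bi-trace constraints of Definition~\ref{bi-trace}, which follows from the monotonicity and finite-valuedness of $f^\pm$ along axis-aligned lines together with the careful choice of cut positions avoiding the finitely many pre-existing transition points of $s$.
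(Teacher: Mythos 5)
Your proof is essentially the paper's own: separate $\Gamma^-$ and $\Gamma^+$ by some $\varepsilon>0$ (boundedness having already been arranged via Lemma~\ref{transformation}), cut $R$ into a finite grid of cells of diameter less than $\varepsilon$, observe that each restricted surrectangle has height at most $N$ and is therefore fabricated by the induction hypothesis, and recover $\partial^s$ by iterated joins using Lemma~\ref{closed:join}. One small caveat: for a cell $C$ touching the boundary of $R$, your claim that $I^-\cap C$ is empty or all of $C$ whenever $C$ misses $\Gamma^-$ can fail, because $\Gamma^-$ only records the part of the boundary of $I^-$ lying in the \emph{interior} of $R$ (e.g.\ $I^-$ may protrude along the southern edge beyond the foot of $\Gamma^-$); the conclusion you need survives anyway, since the lower and upper clusters of $s\restr{C}$ are $f^+$ and $f^-$ at the corners of $C$, which depend only on values in the interior of $C$, so a full-height cell would force points of $I^-$ and $I^+$, hence of $\Gamma^-$ and $\Gamma^+$, inside $C$, contradicting $\operatorname{diam}(C)<\varepsilon$.
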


If $\Gamma^-$ and $\Gamma^+$ are \emph{not} disjoint, define a binary relation $\equiv$ over $R\setminus ({\down{I^-}}\cup {\up {I^+}})$ by letting $\vec x\equiv  \vec y\iff$  for \emph{all} nondegenerate rectangles $[\vec w, \vec z ] \subseteq [\vec x\wedge \vec y, \vec x\vee \vec y]$ the biboundary $\partial^{s\restr{[\vec w, \vec z ]}}$ is fabricated---clearly reflexive and symmetric, and also transitive (use a join of four biboundaries and the induction hypothesis), so an equivalence relation. By using joins and the induction hypothesis if necessary, we may assume $\vec l, \vec r \in R\setminus ({\down{I^-}}\cup {\up {I^+}})$. We aim to show that $\vec l\equiv\vec r$.

\begin{lemma}\label{lem:converge}
Let $\vec x_0\triangleleft \vec x_1\triangleleft \ldots \in R\setminus ({\down{I^-}}\cup {\up {I^+}})$ be an infinite sequence converging to $\vec x$.  If for all $i$ we have $\vec y\equiv \vec x_i$ then $\vec y\equiv \vec x$.
\end{lemma}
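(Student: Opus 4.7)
The plan is to show that every nondegenerate rectangle $[\vec w, \vec z] \subseteq [\vec x\wedge \vec y, \vec x\vee \vec y]$ yields a fabricated biboundary $\partial^{s\restr{[\vec w, \vec z]}}$. Up to the available duals, I may assume $\vec y \triangleleft \vec x$, so that $\vec x$ sits at the SE corner of $[\vec x\wedge \vec y, \vec x\vee \vec y]$ and the $\vec x_i$ approach $\vec x$ from the NW with $x_{i,x}\nearrow x_x$ and $x_{i,y}\searrow x_y$. The proof then splits on whether $\vec x$ lies in $[\vec w, \vec z]$; a short argument (using $\vec x$ is a corner of the enclosing rectangle) shows that when $\vec x\in[\vec w,\vec z]$ it must in fact be the SE corner of $[\vec w,\vec z]$.

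If $\vec x\notin [\vec w,\vec z]$, then in the generic sub-case $z_x<x_x$ and $w_y>x_y$ strictly, so for sufficiently large $i$ we have $[\vec w,\vec z]\subseteq [\vec x_i\wedge \vec y, \vec x_i\vee \vec y]$ and the conclusion follows at once from $\vec y\equiv \vec x_i$. The degenerate sub-cases, where $[\vec w,\vec z]$ touches the line $\{y=x_y\}$ or $\{x=x_x\}$, I would reduce to the main case below: peel off a thin strip along the offending edge using Lemma~\ref{closed:join}, so that the fat remainder falls in the generic case while the thin strip has a SE corner of its own to which the main-case argument applies.

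The main case is $\vec x\in[\vec w,\vec z]$, i.e.\ $\vec x$ is the SE corner of $[\vec w,\vec z]$. The idea is to produce $\partial^{s\restr{[\vec w,\vec z]}}$ as a southeastern limit. For each large $i$, split $[\vec w,\vec z]$ at $\vec x_i$ into four sub-rectangles; the NW sub-rectangle $[(w_x,x_{i,y}),(x_{i,x},z_y)]$ lies in $[\vec x_i\wedge \vec y,\vec x_i\vee \vec y]$ so its biboundary is fabricated by $\vec y\equiv\vec x_i$, and the NE and SW strips are handled similarly by using $\vec y\equiv \vec x_j$ for larger $j$. By pigeonhole on the finite set of biboundaries, extract an infinite subsequence $\vec x_{i_1}\triangleleft \vec x_{i_2}\triangleleft \dots$ for which (a) every NW sub-rectangle has a common biboundary $\partial_0$, and (b) further splitting the NW sub-rectangle at $\vec x_{i_{k+1}}$ by $\vec x_{i_k}$, the three remaining sub-sub-rectangles have biboundaries $\partial_1$ (NE), $\partial_2$ (SW), $\partial_3$ (SE) that are independent of $k$. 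Lemma~\ref{closed:join}, applied twice, yields the self-similarity equation $\partial_0=(\partial_2\join_|\partial_3)\join_-(\partial_0\join_|\partial_1)$, placing $\partial^{s\restr{[\vec w,\vec z]}}$ in the position of the candidate southeastern limit.

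The hard part is verifying the remaining conditions in the southeastern limit definition. Agreement of $\partial^{s\restr{[\vec w,\vec z]}}$ with $\partial_0$ on $\{-,+,l,W,N\}$ holds for large enough $i_k$ because the bi-traces on the extra pieces $\{w_x\}\times[w_y,x_{i_k,y}]$ and $[x_{i_k,x},z_x]\times\{z_y\}$ become trivial once $i_k$ is large (a fixed edge supports only finitely many transitions, and $x_{i_k,y}\to x_y=w_y$, $x_{i_k,x}\to x_x=z_x$). The cluster-constancy requirements---that the lower cluster of $\partial_1(E)$ is constantly $\partial_0(+)$, the upper cluster of $\partial_2(S)$ is constantly $\partial_0(-)$, and the analogous conditions on $\partial^*(E)$ and $\partial^*(S)$---will be the main obstacle. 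For these I would combine the monotonicity of $f^\pm$ along horizontal and vertical lines (from the paragraph preceding Definition~\ref{bi-trace}) with the standing assumption $\vec l,\vec r\in R\setminus(\down I^-\cup \up I^+)$, which, together with the convergence $\vec x_i\to\vec x$, forces $f^-$ and $f^+$ to stabilise to $\partial_0(+)$ and $\partial_0(-)$ along the relevant strip edges. Once this technical verification is in place, $\partial^{s\restr{[\vec w,\vec z]}}$ is by definition a southeastern limit of fabricated biboundaries, hence fabricated, and $\vec y\equiv\vec x$ follows.
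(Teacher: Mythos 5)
Your overall strategy is the paper's: realise $\partial^{s\restr{[\vec w,\vec z]}}$ for a rectangle with southeast corner $\vec x$ as a southeastern limit, using finiteness of the set of biboundaries to extract a subsequence along which the four relevant restriction biboundaries are constant, so that the self-similarity equation $\partial_0=(\partial_2\join_|\partial_3)\join_-(\partial_0\join_|\partial_1)$ holds, and handle rectangles avoiding $\vec x$ by containment in some $[\vec y\wedge\vec x_j,\vec y\vee\vec x_j]$. But there is a genuine gap exactly where you flag ``the main obstacle''. The constancy conditions in the definition of a southeastern limit (lower cluster of $\partial_1(E)$ constantly $\partial_0(+)$, upper cluster of $\partial_2(S)$ constantly $\partial_0(-)$, and likewise for $\partial^*(S)$, $\partial^*(E)$) cannot be derived from monotonicity of $f^\pm$ plus membership of points in $R\setminus(\down I^-\cup\up I^+)$ plus convergence. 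Monotonicity only says $f^+$ is non-decreasing with finitely many values along the southern edge; it is perfectly possible for $\Gamma^-$ to meet the southern edge of $[\vec w,\vec z]$ strictly west of $\vec x$, in which case the upper cluster of $\partial^*(S)$ genuinely jumps and $\partial^*$ is simply not a southeastern limit of your four pieces. (Note also that the standing assumption you invoke concerns $\vec l,\vec r$; what is available here is that the $\vec x_i$ lie in $R\setminus(\down I^-\cup\up I^+)$.) The paper closes this hole by a prior reduction which you have no analogue of: using joins it first reduces to $s\restr{[\vec x_i\wedge\vec x,\vec x_i\vee\vec x]}$ for a suitable $i$, and then uses Lemma~\ref{lem:disjoint} and the induction hypothesis to assume without loss of generality that $\Gamma^-$ and $\Gamma^+$ meet at $\vec x$, that no point of $\Gamma^-$ lies due west of $\vec x$ and no point of $\Gamma^+$ due north of $\vec x$; only from these structural facts does constancy of $f^+$ on the southern edge and of $f^-$ on the eastern edge follow, and with it the limit conditions.

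A second, smaller but real, problem is your treatment of sub-rectangles that avoid $\vec x$ yet are flush against the lines $x=x_x$ or $y=x_y$: you propose to peel off a thin strip and ``apply the main-case argument'' to it, but the main-case argument needs the given sequence $\vec x_i$ to converge to the southeast corner of the rectangle being analysed, and the strip's southeast corner is not $\vec x$, so no such sequence exists for it; this step would fail as written and needs the paper's reductions (or a reduced-height argument) instead. Relatedly, your claim that the NE and SW strips of the split of $[\vec w,\vec z]$ at $\vec x_i$ are ``handled by $\vec y\equiv\vec x_j$ for larger $j$'' is false, since those strips reach the east and north edges of $[\vec w,\vec z]$ and are contained in no $[\vec y\wedge\vec x_j,\vec y\vee\vec x_j]$ --- harmless, because your limit construction does not actually use them, but symptomatic. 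Finally, a cosmetic point: decomposing biboundaries uses the definition of join and the fact that joins of fabricated biboundaries are fabricated, not Lemma~\ref{closed:join}, which goes in the other direction (from biboundaries to surrectangles).
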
  
  Let $P=\set{\vec l, \vec r}\cup(\Gamma^- \cap\Gamma^+)$---a closed set, inheriting a linear order $\triangleleft$ from $\Gamma^-$, and a subset of $R\setminus ({\down{I^-}}\cup {\up {I^+}})$.
Define a binary relation $\approx$ over $P$ as the smallest equivalence relation such that
\begin{enumerate}[(a)]
\item\label{a} $\vec p\approx \vec q$ whenever $\vec p$ is an immediate $\triangleleft$-successor of $\vec q$,
\item $\vec p\approx \vec q$ whenever $[\vec p\wedge \vec q, \vec p\vee \vec q]$ is degenerate,
\item\label{c}
all equivalence classes are topologically closed (in $\mathbb R^2$, equivalently, in $P$). 
\end{enumerate}

\begin{lemma}\label{lem:equiv} $\vec p\approx \vec q$ implies $\vec p\equiv \vec q$.
\end{lemma}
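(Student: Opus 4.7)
\medskip\noindent\textbf{Proof plan.}
The strategy is to show that $\equiv$, restricted to $P$, already satisfies the three defining conditions of $\approx$. Since $\approx$ is the smallest equivalence relation with those properties, this yields $\approx\,\subseteq\,\equiv|_P$, which is the statement to be proved. Condition (b) is immediate: if $[\vec p\wedge\vec q,\vec p\vee\vec q]$ is degenerate, there are no nondegenerate sub-rectangles of it, so the universal condition defining $\vec p\equiv\vec q$ is vacuously true.

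For condition (c), I would deduce topological closedness of the $\equiv|_P$-classes from Lemma~\ref{lem:converge} and its $\triangleleft$-reversed dual. Given a convergent sequence $\vec x_n\to\vec x$ in $P$ with each $\vec x_n\equiv\vec y$, the key observation is that $\Gamma^-$ (which contains $P\setminus\{\vec l,\vec r\}$) is homeomorphic to a closed line segment with $\triangleleft$ matching its linear order, and that $\vec l,\vec r$ are the $\triangleleft$-extremes of $P$. This lets me extract a $\triangleleft$-monotonic subsequence still converging to $\vec x$, and the appropriate direction of Lemma~\ref{lem:converge} then gives $\vec y\equiv\vec x$.

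Condition (a) is the main obstacle. Let $\vec q\triangleleft\vec p$ be immediate $\triangleleft$-successors in $P$; I must show that for every nondegenerate $[\vec w,\vec z]\subseteq[\vec q\wedge\vec p,\vec q\vee\vec p]$ the biboundary $\partial^{s\restr{[\vec w,\vec z]}}$ is fabricated. The immediate-successor condition yields the key geometric fact $\Gamma^-\cap\Gamma^+\cap[\vec q\wedge\vec p,\vec q\vee\vec p]\subseteq\{\vec q,\vec p\}$. I would proceed by cases. If the height of $s\restr{[\vec w,\vec z]}$ is at most $N$, the inductive hypothesis applies directly. Otherwise the height is $N+1$, which forces the lower and upper clusters of the restricted surrectangle to coincide with $c^-$ and $c^+$, so that the $\Gamma^-$ and $\Gamma^+$ curves of the restriction are contained in the original $\Gamma^-\cap[\vec w,\vec z]$ and $\Gamma^+\cap[\vec w,\vec z]$. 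If $[\vec w,\vec z]$ contains neither $\vec q$ nor $\vec p$, these restricted curves are disjoint and Lemma~\ref{lem:disjoint} yields fabrication. If $\vec q$ lies in $[\vec w,\vec z]$ (necessarily as its NW corner)---and symmetrically for $\vec p$ as SE corner---I would split $[\vec w,\vec z]$ by one horizontal and one vertical line through an interior point $\vec a$ chosen very close to $\vec q$, producing a $2\times 2$ grid. The three subrectangles avoiding $\vec q$ fall under earlier cases (or under a recursive application of the same argument if they still contain $\vec p$), while the remaining NW piece, being close to $\vec q$, can be forced to have height at most $N$ and so is handled by the inductive hypothesis. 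Lemma~\ref{closed:join} then assembles these four fabricated biboundaries back into a fabricated biboundary for the whole.

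The hardest step is confirming that the corner piece can indeed be made to have strictly smaller height than $s$. This requires a local analysis at the pinch point $\vec q\in\Gamma^-\cap\Gamma^+$: using $\vec q\notin\down I^-\cup\up I^+$, I expect that arbitrarily close to $\vec q$ one finds points taking intermediate cluster values, so that by shrinking the corner piece its lower cluster strictly exceeds $c^-$ or its upper cluster strictly falls below $c^+$. Verifying this geometric fact (and checking the requisite compatibility of boundaries so that the joins of Lemma~\ref{closed:join} really are defined) is where I expect the bulk of the technical work to lie.
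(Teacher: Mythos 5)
Your overall strategy---showing that $\equiv$ restricted to $P$ satisfies conditions (a), (b), (c), so that the smallest such relation $\approx$ is contained in $\equiv$---is exactly the paper's. Your treatment of (b) (vacuous for degenerate rectangles) and of (c) (closedness via Lemma~\ref{lem:converge} and its dual, after extracting a $\triangleleft$-monotone subsequence) matches the paper.

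The gap is in condition (a), and it is substantial. You try to avoid using Lemma~\ref{lem:converge} there and instead argue by a $2\times 2$ grid split, relying on the claim that a small corner piece with $\vec q$ (a pinch point in $\Gamma^-\cap\Gamma^+$) as its NW corner ``can be forced to have height at most $N$.'' This claim is false in the generic situation. At a pinch point $\vec q$ both boundary curves pass through; locally $I^-$ lies to the southwest of $\Gamma^-$ and $I^+$ to the northeast of $\Gamma^+$. So for any small rectangle whose NW corner is $\vec q$, its SW corner is typically in the interior of $I^-$ and its NE corner in the interior of $I^+$, hence the restriction has lower cluster $c^-$ and upper cluster $c^+$, i.e.\ full height $N+1$, no matter how much you shrink. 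The fact that $\vec q\notin\down I^-\cup\up I^+$ tells you that intermediate cluster values appear near $\vec q$; it does not tell you that the \emph{corner clusters} of a small rectangle become intermediate. This is precisely why the inductive argument cannot be closed by joins alone: a pinch produces an accumulation of strictly-smaller restrictions that never individually reaches the pinch, and the paper introduced the limit operation on biboundaries (packaged here as Lemma~\ref{lem:converge}) specifically to cross this obstruction. The paper's proof of (a) uses Lemma~\ref{lem:converge} twice: take a monotone sequence in the interior of $[\vec q\wedge\vec p,\vec q\vee\vec p]$ converging to $\vec p$; the restrictions between interior points have disjoint $\Gamma^\pm$ (there are no pinch points in the open rectangle), so are fabricated by Lemma~\ref{lem:disjoint}; Lemma~\ref{lem:converge} then gives $\vec y\equiv\vec p$ for any interior $\vec y$, and a second (dual) application, pushing $\vec y$ towards $\vec q$, gives $\vec q\equiv\vec p$. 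You should replace your grid argument for (a) with this use of Lemma~\ref{lem:converge}.

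A smaller point: to conclude that the assembled biboundary of $[\vec w,\vec z]$ is fabricated, the relevant fact is closure of fabricated biboundaries under joins (the definition), not Lemma~\ref{closed:join}, which goes in the opposite direction (from biboundaries to surrectangles).
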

For any $\approx$-equivalence class $E$, let $l(E), r(E)$ be the extreme points with respect to $\triangleleft$ (equal if $E$ is a singleton). As $E$ is closed, $l(E), r(E) \in E$.  Write $R(E)$ for the closed rectangle $[l(E)\wedge r(E),  l(E) \vee r(E)]$ (a singleton if and only if $E$ is a singleton).  Since $\vec p\approx \vec q$ we know $l(E)\equiv r(E)$.

\begin{lemma}\label{lem:approx}
Either all elements of $P$ are $\approx$-equivalent, or there are uncountably many  singleton equivalence classes of $\approx$.
\end{lemma}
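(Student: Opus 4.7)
The plan is to reduce the statement to a cardinality argument on the quotient $P/{\approx}$. First, because $\Gamma^-$ is homeomorphic to a closed line segment and linearly ordered by $\triangleleft$, and since $\vec l$ and $\vec r$ attach as $\triangleleft$-minimum and $\triangleleft$-maximum of $P$, $(P,\triangleleft)$ is order-isomorphic (and, modulo treating possibly isolated endpoints, homeomorphic) to a closed subset of a bounded real interval. In particular $(P,\triangleleft)$ is Dedekind complete: any nonempty bounded subset has a $\triangleleft$-supremum, and it lies in $P$ because $P$ is closed in $\mathbb R^2$.

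The first key step is to show that every $\approx$-equivalence class $E$ is \emph{convex} in $(P,\triangleleft)$. I would prove this by induction on the stages generating $\approx$. Clause (a) identifies only immediate $\triangleleft$-successors, so no intermediate $P$-point is skipped; clause (b) identifies $\vec p,\vec q$ lying on a common horizontal or vertical line, and any $\vec r\in P$ with $\vec p\triangleleft\vec r\triangleleft\vec q$ is forced by the definition of $\triangleleft$ to lie on that same line, hence is identified by a further application of (b); transitive closure of a convex relation in a linear order is convex; and, in a Dedekind complete linearly ordered topological space, the closure of a convex set is convex. Since classes are also closed by (c), each class is a closed $\triangleleft$-sub-interval of $P$ with extremes $l(E), r(E)\in E$.

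The quotient $(P/{\approx},<)$ is then a densely ordered Dedekind complete linear order with endpoints $[\vec l]$ and $[\vec r]$. For density, given distinct classes $E_1<E_2$, the points $\vec p:=r(E_1)$ and $\vec q:=l(E_2)$ satisfy $\vec p\triangleleft\vec q$, and if they were immediate $\triangleleft$-neighbours in $P$ then (a) would force $\vec p\approx\vec q$; any $\vec r\in P$ strictly between them has a class strictly between $E_1$ and $E_2$. Dedekind completeness transfers: for a nonempty bounded family $\mathcal C$ of classes, the class of $\sup(\bigcup\mathcal C)$ serves as the supremum of $\mathcal C$. A standard interpolation argument---building an order-embedding of the dyadic rationals and taking suprema of their initial segments by Dedekind completeness---then shows that any densely ordered Dedekind complete order with two distinct endpoints has cardinality at least $\mathfrak c$, and is in particular uncountable.

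The proof concludes by counting non-singleton classes. Each non-singleton $E$ yields a nondegenerate open interval $(l(E),r(E))$ in the real interval containing the embedded copy of $P$, and these open intervals are pairwise disjoint: if two overlapped, WLOG $l(E')\in(l(E),r(E))\cap P$, whence convexity of $E$ forces $l(E')\in E$, contradicting the disjointness of distinct classes. Since any family of pairwise disjoint nondegenerate open intervals in a real interval is countable, only countably many classes are non-singleton. Hence, if $P$ is not a single $\approx$-class, the uncountable quotient contributes uncountably many singleton classes. The main obstacle I anticipate is the convexity claim---in particular verifying that the topological closure step (c) does not merge two $\triangleleft$-non-adjacent convex classes into a non-convex one---rather than the subsequent cardinality count, which is then routine.
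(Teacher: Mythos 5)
Your proof is correct, but it reaches the conclusion by a more self-contained route than the paper. The paper transfers everything to the unit interval: the homeomorphism $\Gamma^-\cong[0,1]$ sends the sets $R(E)\cap\Gamma^-$ to closed intervals partitioning $[0,1]$, and then it simply cites Lemma~2.9 of Hirsch--Reynolds, which says that a partition of $[0,1]$ into more than one closed interval must have uncountably many degenerate pieces; the singleton intervals are then matched to singleton $\approx$-classes. You instead work with the quotient $(P/{\approx},<)$ directly, prove it is a dense Dedekind-complete order with distinct endpoints, conclude it has size at least $\mathfrak c$, and subtract the countably many non-singleton classes (disjoint nondegenerate intervals); this amounts to reproving the cited interval-partition lemma rather than invoking it. Note also that both arguments rest on the same structural fact, which the paper asserts without proof (disjointness of the spans $R(E)\cap\Gamma^-$, i.e.\ convexity of classes along $\Gamma^-$) and which you prove explicitly---this is the genuinely delicate point, and you rightly flag the interaction with closure condition (c). Your staged induction does go through (merging a convex class with the class of one of its limit points, and taking closures and increasing unions, all preserve convexity), but if you want to avoid the transfinite bookkeeping entirely you can argue more slickly: define $\vec p\sim\vec q$ iff all points of $P$ in the $\triangleleft$-interval between them are pairwise $\approx$-equivalent; check that $\sim$ is an equivalence relation satisfying (a)--(c) and is contained in $\approx$, so by minimality $\approx\,=\,\sim$, whose classes are convex by construction. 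With that in place, your cardinality count is sound and delivers exactly the dichotomy of the lemma.
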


 Let $\vec x=r(E)$, where $E$ is the $\approx$-equivalence class of $\vec l$, and let $\vec y=l(E')$ where $E'$ is the $\approx$-equivalence class of $\vec r$.  Since $\vec x$ is the most southeastern point in $E$, and has no immediate $\triangleleft$-successor in $P$, there is no point of $\Gamma^-$ due south of $\vec x$, similarly no point of $\Gamma^+$ due east of $\vec x$. Dual conditions hold for $\vec y$.
  It follows that $f^+$ is constantly $c^-$ on the south and west edges of $[\vec x\wedge \vec y, \vec x\vee \vec y]$, and $f^-$ is constantly $c^+$ on the north and east edges. 
    
   Since $\vec l\approx \vec x$ and $\vec y\approx\vec r$ we know that $\vec l\equiv \vec x$ and $\vec y\equiv\vec r$.  The next lemma shows that $\vec x\equiv \vec y$.  We omit the proof, which is to check each of the conditions of Definition~\ref{shuffle}.
\begin{lemma}\label{lem:upper}
Suppose the upper cluster on the south and west edges of $s$ is constantly $c^-$, and the lower cluster on the north and east edges of $s$ is constantly $c^+$.
Then $\partial^s$ is a shuffle of $\partial^{s\restr{R(E)}}$ where $E$ ranges over $\approx$-equivalence classes.
\end{lemma}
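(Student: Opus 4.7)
The plan is to verify Definition~\ref{shuffle} directly, witnessing $\partial^s$ as a shuffle via $\Delta := \{\partial^{s\restr{R(E)}} : E \text{ is a non-singleton } \approx\text{-equivalence class}\}$ and $M := \{f(\vec p) : \vec p \in P \text{ is a singleton } \approx\text{-class}\}$. When the $\approx$-decomposition is nontrivial, Lemma~\ref{lem:approx} furnishes uncountably many singleton points, so $M$ is nonempty; the degenerate case where $P$ is a single class is handled separately, since then one already has $\vec l \equiv \vec r$ and no shuffle is needed.

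Condition~\eqref{equal} of Definition~\ref{shuffle} and its duals are immediate from the standing hypothesis: ``upper cluster constantly $c^-$ on the south and west edges'' says exactly that the $f^+$-values along these edges all equal $\partial^s(-)$, and dually for north and east. Conditions~\eqref{fits} and~\eqref{one-point} are geometric: for a non-singleton class $E$, the top-right corner $l(E) \vee r(E)$ of $R(E)$ is strictly $\prec$-above both $l(E)$ and $r(E)$, which lie in $\Gamma^- \cap \Gamma^+$. Combining upward-closedness of $I^+$ with the structural properties of $\Gamma^-, \Gamma^+$ at equivalence-class endpoints (the same argument used immediately before the lemma to show that no $\Gamma^+$ lies due east of $\vec x$ and no $\Gamma^-$ due south of $\vec x$, applied now to $l(E)$ and $r(E)$), this places the top-right corner of $R(E)$, and the relevant portion just above the north edge and just east of the east edge of $R(E)$, inside $I^+$. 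Consequently the appropriate upper clusters of $\partial^{s\restr{R(E)}}$ equal $c^+ = \partial^s(+)$, and $\partial^{s\restr{R(E)}}(t) \in c^+$. For condition~\eqref{one-point}, each singleton $\vec p$ lies in $\Gamma^- \cap \Gamma^+$, so $f(\vec p) \leq c^+$, and all its future defects pass up to $c^+$ by Definition~\ref{surrectangle}~\eqref{no_defects} applied at $\vec p$ (any neighbourhood of $\vec p$ meets $I^+$).

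The substantive content is condition~\eqref{shuffle_defects}. Given a future defect $\F\psi$ of $c^-$, fix $m \in c^-$ with $\F\psi \in m$ and pick $\vec z \in I^-$ with $f(\vec z) = m$. By Definition~\ref{surrectangle}~\eqref{no_defects} applied at $\vec z$, either there is an internal resolution $\vec w \in \up\vec z$ satisfying $\psi$, or $\F\psi$ is passed up to a boundary point due north or east of $\vec z$; the edge hypotheses of the present lemma ensure that in the boundary case the defect is passed up to $c^+$ and may be re-traced inward. In either case we locate the witness relative to the $\{R(E)\}$-decomposition: a witness inside (or on the west or south edge of) a non-singleton $R(E)$ is accounted for by one of $\partial^{s\restr{R(E)}}(b)$, $\partial(l)$, $\partial(W)$, or $\partial(S)$ (according to its precise position), while a witness at a singleton $\vec p \in P$ contributes to $M$. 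The duals of~\eqref{equal},~\eqref{shuffle_defects},~\eqref{fits}, and~\eqref{one-point} follow symmetrically.

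The chief obstacle is the case analysis in condition~\eqref{shuffle_defects}, which must match each possible location of a defect-witness inside $R$ against the disjunctive list of admissible locations in Definition~\ref{shuffle}; this relies crucially on both the earlier structural results about $\Gamma^-$ and $\Gamma^+$ and on the fact that the $\approx$-decomposition partitions $R$ cleanly into the $R(E)$'s and the singleton points of $P$, with no witness falling outside this decomposition.
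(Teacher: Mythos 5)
The paper omits the proof of this lemma entirely (``We omit the proof, which is to check each of the conditions of Definition~\ref{shuffle}''), so there is no paper proof to compare against; I can only assess your argument on its own terms. Your high-level plan --- take $\Delta$ to be the biboundaries of the non-singleton $R(E)$'s, take $M$ to be the $f$-values at the singleton points of $P$, and verify the five conditions of Definition~\ref{shuffle} --- is exactly the right approach, and your treatment of conditions~\eqref{equal}, \eqref{fits} and \eqref{one-point} is broadly on target (the appeal to the ``no $\Gamma^\pm$ due south/east'' facts at the class endpoints needs a little more spelling out, but it is the right ingredient). However, there is a genuine gap in your argument for condition~\eqref{shuffle_defects}.

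The problem is the sentence asserting ``the $\approx$-decomposition partitions $R$ cleanly into the $R(E)$'s and the singleton points of $P$, with no witness falling outside this decomposition.'' This is false: the rectangles $R(E)$ together with the singletons of $P$ cover only a neighbourhood of the diagonal $\Gamma^- \cap \Gamma^+$ (more precisely they cover $\Gamma^-$, as exploited in the proof of Lemma~\ref{lem:approx}), not the whole of $R$. The witness produced by Definition~\ref{surrectangle}\eqref{no_defects} at your chosen $\vec z \in I^-$ can be any point $\vec w \in \up\vec z$, which may lie deep in $I^+$ far from every $R(E)$, or may be a boundary point on the north or east edge; neither is ``accounted for'' by the case analysis as written, and ``may be re-traced inward'' is not a proof. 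What is missing is the step of replacing the raw witness $\vec w$ by the point $\vec u$ where the segment from $\vec z$ to $\vec w$ exits $I^-$. This $\vec u$ lies on $\Gamma^-$, which \emph{is} partitioned by the $R(E)$'s; since $\vec u \preceq \vec w$ one gets $\F\psi \in f(\vec u)$ (or $\psi \in f(\vec u)$ if $\vec u = \vec w$) from the definition of $\lesssim$, and then, if $\vec u$ is in the interior of some non-singleton $R(E)$, one pushes further to the corner $\vec b_E = l(E) \wedge r(E) \prec \vec u$ to obtain $\F\psi \in f(\vec b_E) = \partial^{s\restr{R(E)}}(b)$, while $\vec u = l(E)$, $\vec u = r(E)$ or $\vec u$ a singleton give $\partial(l)$, $\partial(r)$ or an element of $M$ respectively. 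Without this retraction to $\Gamma^-$ the case analysis does not close, so condition~\eqref{shuffle_defects} is not established. (A smaller issue: rather than fixing $m$ with $\F\psi \in m$ and hoping $m$ is realised on $I^-$, note that a future defect of the cluster $c^-$ lies in \emph{every} member of $c^-$, so any $\vec z \in I^-$ works.)
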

Using joins and the induction hypothesis, this proves $\vec l\equiv\vec r$.  Hence we have obtained our goal.

\begin{lemma}\label{thm:surr}
Let $s$ be any surrectangle.  The biboundary $\partial^s$ is fabricated.
\end{lemma}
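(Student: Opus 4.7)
The plan is to prove Lemma~\ref{thm:surr} by induction on the height of the surrectangle $s$, exactly as the section has been building toward. For the base case (height $0$), the upper and lower clusters $c^+$ and $c^-$ must coincide, so every $m$ in the image of the core map satisfies $c^- \leq m \leq c^+$ with equality on clusters; all conditions in the definition of a ground fabricated biboundary then reduce to statements already enforced by the definition of a surrectangle, as already noted in the text.

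For the inductive step, I assume the result for all surrectangles of height at most $N$ and take $s$ of height $N+1$. By Lemma~\ref{transformation} I may assume the rectangle $R$ is bounded, and by first slicing $s$ with a horizontal and then a vertical line through its interior and applying joins (Lemma~\ref{closed:join}) together with the induction hypothesis, I may assume that the northwest corner $\vec l$ and the southeast corner $\vec r$ lie in $R \setminus (\down I^- \cup \up I^+)$. The dichotomy on which the argument pivots is then whether $\Gamma^-$ and $\Gamma^+$ are disjoint: if they are, I apply Lemma~\ref{lem:disjoint} to conclude directly that $\partial^s$ is fabricated.

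If instead $\Gamma^- \cap \Gamma^+ \neq \emptyset$, I pass to the equivalence relation $\equiv$ on $R \setminus (\down I^- \cup \up I^+)$, whose whole point is that $\vec x \equiv \vec y$ already certifies that the restriction surrectangle has a fabricated biboundary. The target reduces to showing $\vec l \equiv \vec r$. For this I pass in turn to the topological equivalence relation $\approx$ on $P = \{\vec l, \vec r\} \cup (\Gamma^- \cap \Gamma^+)$, using Lemma~\ref{lem:equiv} to transfer $\approx$ to $\equiv$. Taking $\vec x = r(E)$ and $\vec y = l(E')$ for the $\approx$-classes $E, E'$ of $\vec l$ and $\vec r$ respectively, the extremality of $\vec x$ and $\vec y$ forces $f^+$ to be constantly $c^-$ on the south and west edges of $[\vec x \wedge \vec y, \vec x \vee \vec y]$ and $f^-$ to be constantly $c^+$ on its north and east edges, which is precisely the hypothesis of Lemma~\ref{lem:upper}. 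That lemma then expresses $\partial^{s\restr{[\vec x \wedge \vec y, \vec x \vee \vec y]}}$ as a shuffle of the biboundaries $\partial^{s\restr{R(E'')}}$ over the $\approx$-equivalence classes $E''$; each such restriction has $l(E'') \equiv r(E'')$, so its biboundary is fabricated by the induction hypothesis (and Lemma~\ref{closed:join} to upgrade from an $\equiv$ witness to a fabricated biboundary).

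Finally, I stitch everything together: $\partial^{s\restr{[\vec x \wedge \vec y, \vec x \vee \vec y]}}$ is fabricated as a shuffle of fabricated biboundaries, while the three strip-shaped complementary pieces of $R$ around this central rectangle have strictly smaller height (they avoid either $c^-$ or $c^+$), so their biboundaries are fabricated by the induction hypothesis; assembling the four pieces with three vertical and horizontal joins via Lemma~\ref{closed:join} produces $\partial^s$ as a fabricated biboundary. The delicate step I expect to be the main obstacle is verifying that the hypotheses of Lemma~\ref{lem:upper} really follow from the extremality of $\vec x$ and $\vec y$ in their $\approx$-classes — one needs the geometric fact that no point of $\Gamma^-$ lies due south of $\vec x$ and no point of $\Gamma^+$ lies due east of $\vec x$, which in turn relies on the closure condition~(\ref{c}) in the definition of $\approx$ together with Lemma~\ref{lem:converge} to rule out sequences of $\triangleleft$-successors accumulating at $\vec x$.
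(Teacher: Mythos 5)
Your overall strategy is the paper's own: induction on height, \cref{transformation} to reduce to bounded domains, the dichotomy via \cref{lem:disjoint}, the relations $\equiv$ and $\approx$, the extreme points $\vec x = r(E)$, $\vec y = l(E')$, and \cref{lem:upper} to get a shuffle on the central rectangle. Up to that point your proposal matches the intended proof, and your closing remark correctly identifies where the geometric content lies (no point of $\Gamma^-$ due south of $\vec x$, none of $\Gamma^+$ due east, via closedness of $\approx$-classes and \cref{lem:converge}).

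The gap is in your final stitching step. You claim the complementary pieces of $R$ around the central rectangle $[\vec x \wedge \vec y, \vec x \vee \vec y]$ ``have strictly smaller height (they avoid either $c^-$ or $c^+$)'' and are therefore handled by the induction hypothesis. That is true for the pieces lying entirely northeast of $\vec x$ or $\vec y$, or entirely southwest of them (their core maps take values only in $c^+$, respectively $c^-$), but it is false for the two pieces that matter: the northwest block $[\vec l \wedge \vec x, \vec l \vee \vec x]$ and the southeast block $[\vec y \wedge \vec r, \vec y \vee \vec r]$. When the $\approx$-class $E$ of $\vec l$ is nontrivial—say $\Gamma^- \cap \Gamma^+$ contains a sequence of touching points accumulating at $\vec x = r(E)$—the block $[\vec l \wedge \vec x, \vec l \vee \vec x]$ contains those touching points, so both $c^-$ and $c^+$ are attained arbitrarily close to them and its restriction still has height $N+1$; the induction hypothesis does not apply. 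Indeed, if your height claim were correct, the whole $\approx$-class apparatus and the limit operation would be superfluous, which is a sign something is wrong. The correct completion, which your own setup already provides, is: $\vec l \approx \vec x$ and $\vec y \approx \vec r$ give $\vec l \equiv \vec x$ and $\vec y \equiv \vec r$ by \cref{lem:equiv} (whose proof rests on southeastern/northwestern \emph{limits} via \cref{lem:converge}, not on a height drop); \cref{lem:upper} gives $\vec x \equiv \vec y$; and transitivity of $\equiv$—which is where joins and the induction hypothesis enter, but only for the height-$0$ corner blocks created by splitting at $\vec x$ and $\vec y$—yields $\vec l \equiv \vec r$, hence $\partial^s$ is fabricated. (A minor further point: $l(E'') \equiv r(E'')$ already yields, by the definition of $\equiv$, that $\partial^{s\restr{R(E'')}}$ is fabricated; no appeal to \cref{closed:join} or to the induction hypothesis is needed there, and for singleton classes the corresponding points contribute the set $M$ of maximal consistent sets in the shuffle rather than biboundaries.)
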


Combining Lemmas~\ref{section4summary} and \ref{thm:surr} we have the following.

\begin{lemma}\label{main_lemma}
A biboundary $\partial$ is of the form $\partial^s$ for some surrectangle $s$ if and only if $\partial$ is a fabricated biboundary.
\end{lemma}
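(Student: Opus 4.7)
The plan is to observe that this biconditional is already a straightforward consequence of the two preceding main results: Lemma~\ref{section4summary} gives the ``fabricated $\Rightarrow$ realised by a surrectangle'' direction, while Lemma~\ref{thm:surr} gives the converse ``$\partial^s$ is fabricated'' direction. So the proof amounts to assembling these two inclusions, with essentially no new work beyond citing them.

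In more detail, for the ``if'' direction I would take $\partial$ fabricated and apply Lemma~\ref{section4summary} directly to obtain a surrectangle $s$ with $\partial^s = \partial$. For the ``only if'' direction I would take any surrectangle $s$ and apply Lemma~\ref{thm:surr} to conclude that $\partial^s$ is fabricated. Since neither direction introduces new constructions or case analysis, the proof is a one-sentence appeal to the two lemmas.

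There is no real obstacle here: the entire burden of the argument lies in the prior sections. The genuinely hard work was, on the one side, showing that the three biboundary operations (join, limit, shuffle) can be realised on the surrectangle level (Lemmas~\ref{closed:ground}--\ref{closed:shuffle}, combined in Lemma~\ref{section4summary}), and on the other side, the induction on height in Section~\ref{from_surrectangle} that used the topological analysis of $\Gamma^-, \Gamma^+$ together with the equivalences $\equiv$ and $\approx$ to reduce an arbitrary surrectangle to a shuffle of strictly smaller-height ones (Lemma~\ref{thm:surr}). Once both are in hand, Lemma~\ref{main_lemma} requires nothing more than the remark that two sets of biboundaries have been shown to coincide because each is contained in the other.
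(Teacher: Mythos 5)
Your proposal matches the paper exactly: the paper states Lemma~\ref{main_lemma} immediately after the sentence ``Combining Lemmas~\ref{section4summary} and \ref{thm:surr} we have the following,'' so its proof is precisely the one-line assembly of those two lemmas that you describe. Correct, and no difference in approach.
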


Now we are in a position to prove our first main result.

\begin{theorem}\label{thm:decidable}
It is decidable whether a formula of the basic temporal language is valid on the frame consisting of two-dimensional Minkowski spacetime equipped with the irreflexive slower-than-light accessibility relation. The same is true with reflexive slower-than-light accessibility.
\end{theorem}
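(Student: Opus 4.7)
My plan is to reduce validity of $\phi$ to unsatisfiability of $\psi \coloneqq \neg\phi$, and then to characterise satisfiability of $\psi$ on the frame in terms of fabricated biboundaries. Since there are only finitely many biboundaries and fabrication is defined by computable operations on this finite ambient set, the resulting criterion yields an effective decision procedure.

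In detail, I would first compute $\Cl(\phi)$ and $\mcs$ (the latter using the decidability of satisfiability on the class of all temporal frames \cite{Spaan1993}). From $\mcs$ one reads off the cluster preorder and then enumerates the finite set of all biboundaries. The fabricated biboundaries are produced by saturation starting from the ground fabricated biboundaries, which are visible directly from the cluster structure, and closing under joins, limits, and shuffles. Each of these three operations is evidently computable from its definition, and because the ambient set of biboundaries is finite, the saturation terminates.

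The heart of the proof is the criterion itself: $\psi$ is satisfiable on $(\mathbb R^2, \prec)$ iff there is a fabricated biboundary $\partial$ whose realising surrectangle $s$ (existing by Lemma~\ref{main_lemma}) has some MCS containing $\psi$ in the image of its core map. In the forward direction, a satisfying model on the frame, suitably restricted to a bounded rectangle containing a witness point and equipped with boundary bi-traces extracted from its behaviour just outside that rectangle, is a surrectangle; by Lemma~\ref{main_lemma} its biboundary is fabricated. In the reverse direction, given a fabricated $\partial$ and surrectangle $s$ supplied by Lemma~\ref{main_lemma} whose core map hits an MCS $m \ni \psi$, I would rescale $s$ to a bounded sub-region using Lemma~\ref{transformation} and then join on (by Lemma~\ref{closed:join}) padding surrectangles on each side, each chosen so as to absorb every future or past defect carried by the boundary bi-traces of $\partial^s$, yielding a preorder-preserving map on all of $\mathbb R^2$ with all defects resolved internally and $\psi$ in the image. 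The final test---whether any fabricated biboundary admits a core-map value containing $\psi$---is then a finite computation.

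The reflexive case follows from the standard translation $\F^{\mathrm{refl}}\chi \mapsto \chi \vee \F\chi$ and its dual for $\P$, which reduces the reflexive validity problem to the irreflexive one. The step I expect to be most delicate is the padding construction in the reverse direction: one must exhibit concrete surrectangles to attach around $s$ whose core maps resolve every defect remaining in the bi-traces of $\partial^s$. I would build these from cluster-constant ground surrectangles, possibly combined via further joins or a shuffle to accommodate variable transition values, but the bookkeeping needed to verify that no defect is left unresolved on the outer boundary is where real care is required.
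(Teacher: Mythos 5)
Your decision criterion is not the one the paper uses, and in its right-to-left direction it is actually false, not merely delicate. First, a biboundary does not record the interior values of a core map, so ``some realising surrectangle has a maximal consistent set containing $\psi$ in its image'' is not a finite test on fabricated biboundaries as you claim. Second, and more seriously, that condition does not imply satisfiability of $\psi$ on $(\mathbb R^2,\prec)$: the surrectangle and biboundary conditions deliberately allow future defects to be deferred outward through the northern/eastern boundary, and nothing constrains the future defects of the corner value $\partial(t)$ at all. Concretely, take $\psi$ to be the negation of two-density, $\neg((\F p\wedge\F q)\rightarrow\F(\F p\wedge\F q))$. It is satisfiable on a three-point temporal frame, so some $m\in\mcs$ contains it, yet it is unsatisfiable on $(\mathbb R^2,\prec)$ because two-density is valid there. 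Now put $m$ at the top-right corner of $[0,1]^2$ and fill the rest of the square with a single reflexive cluster $c<m$ taken from a general temporal model satisfying $m$ (all defects of interior and edge points are passed up to $m$ at the corner, and the future defects of the cluster $c$ are passed up to $\partial(t)=m$, as the biboundary conditions permit). This is a closed surrectangle of height $0$, its biboundary is a \emph{ground fabricated} biboundary, and $m\ni\psi$ lies in the image of its core map. Hence no padding of the kind you propose can exist: resolving the defects that a surrectangle exports across its boundary in a plane-compatible way \emph{is} the satisfiability problem, and it cannot be discharged by cluster-constant pads, joins, or shuffles attached after the fact.

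The paper sidesteps this by never asking whether a given bounded surrectangle extends to the plane. It shows satisfiability of $\phi$ is equivalent to the existence of an \emph{open} surrectangle with domain all of $\mathbb R^2$ (just a core map, no boundary data) with $\phi$ at some point $\vec x$, and then decomposes the plane into the four quadrants $NE(\vec x)$, $NW(\vec x)$, $SE(\vec x)$, $SW(\vec x)$. Each quadrant is an unbounded surrectangle; because, say, $NE(\vec x)$ has no northern or eastern boundary, its defect condition forces every future defect to be resolved internally, and the witness maximal consistent set sits at a \emph{corner} of each quadrant, which is recorded in the biboundary ($b$, $t$, $l$, or $r$). By Lemma~\ref{main_lemma}, satisfiability is then equivalent to the existence of four fabricated biboundaries of the appropriate quadrant types, agreeing on their shared bi-traces and corner value, with $\phi$ in that corner set---a finite, computable matching condition. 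Your preliminary steps (computing $\Cl(\phi)$ and $\mcs$, enumerating the finitely many biboundaries, terminating saturation under joins, limits and shuffles) and your reflexive-to-irreflexive translation agree with the paper; the missing idea is this quadrant decomposition, which both makes the witness visible in the biboundary data and eliminates the possibility of defects escaping to an outer boundary.
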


\begin{proof}
Decidability of validity is equivalent to decidability of satisfiability; we prove the latter.

 We first show that satisfiability of $\phi$ on $(\mathbb R^2, \prec)$ is equivalent to the existence of an open surrectangle having some point assigned a maximal consistent set containing $\phi$. An open surrectangle consists only of its core map $f$. Given a valuation $v$ on $(\mathbb R^2, \prec)$ and a point $\vec x$ at which $\phi$ holds, define $f$ by $f(\vec y) = \{\psi \in \Cl(\phi) \mid (\mathbb R^2, \prec), v, \vec y \models \psi \}$, and then $f$ will be a surrectangle on $\mathbb R^2$ with $\phi \in f(\vec x)$. Conversely, given such an $f$, for any propositional variable $p$ if $p$ appears in $\phi$, define $v(p) = \{\vec y \mid p \in f(\vec y)\}$, and otherwise define $v(p)$ arbitrarily. Then $\phi$ holds at $\vec x$ under valuation $v$.

Let $NE(x, y) = \{(x', y') \in \mathbb R^2 \mid x \leq x'\text{ and }y \leq y'\}$ and define $NW(\vec x)$, $SE(\vec x)$, and $SW(\vec x)$ similarly, in the evident way. The existence of an open surrectangle having some point $\vec x$ assigned a maximal consistent set containing $\phi$ is equivalent to the existence of four surrectangles with domains $NE(\vec x)$, $NW(\vec x)$, $SE(\vec x)$, and $SW(\vec x)$, agreeing on their shared edges and at $\vec x$, and with $\phi$ contained in the maximal consistent set assigned to $\vec x$. By Lemma~\ref{main_lemma}, this is in turn equivalent to the existence of four fabricated biboundaries of the appropriate types that match up in the appropriate way.

Since satisfiability of $\phi$ is equivalent to the existence of a set of four fabricated biboundaries with properties that are easily checked, and the set of all fabricated biboundaries is finite and computable, the satisfiability problem is decidable. This completes the proof of the irreflexive case.

The reflexive case follows by the reduction given by recursively replacing subformulas of $\phi$ of the form $\F\varphi$ with $\varphi \wedge \F \varphi$, and similarly for $\P$.
\end{proof}

\section{A PSPACE procedure for fabricated biboundaries}\label{section_pspace}

In this section, we refine the decidability results of Theorem~\ref{thm:decidable} to show the validity problems are $\mathsf{PSPACE}$-complete.

\begin{theorem}\label{pspace}
On the frame consisting of two-dimensional Minkowski spacetime equipped with the irreflexive slower-than-light accessibility relation, the set of validities of the basic temporal language is $\mathsf{PSPACE}$-complete. The same is true with reflexive slower-than-light accessibility.
\end{theorem}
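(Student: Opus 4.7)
The plan is to establish the result in two parts. For $\mathsf{PSPACE}$-hardness, the basic modal ($\F$-only) fragment of the validities of our frame is exactly $\mathbf{OI.2}$ (respectively $\mathbf{S4.2}$ in the reflexive case), and satisfiability for each of these logics is already $\mathsf{PSPACE}$-hard by \cite{Shapirovsky2005-SHAOPI}. Since every $\F$-only formula is a temporal formula, hardness transfers via the identity reduction.

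For the upper bound, Theorem~\ref{thm:decidable} reduces satisfiability of $\phi$ to the existence of four fabricated biboundaries matching on their shared edges. Matching is a local polynomial-time test, so it suffices to give a polynomial-space procedure that decides whether a biboundary is fabricated. The intended approach exploits the identity $\mathsf{APTIME} = \mathsf{PSPACE}$: on input $\partial$, check in polynomial time whether $\partial$ is ground fabricated; otherwise existentially guess which of join, limit, or shuffle produced $\partial$, existentially commit to its operand biboundaries and to the auxiliary data of the corresponding definition (the set $M \subseteq \mcs$ in the shuffle case); verify the local side-conditions in polynomial time; and universally pick one operand and recurse.

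The main obstacle is simultaneously bounding (i) the size of the biboundaries manipulated and (ii) the depth of the alternating recursion by polynomials in $|\phi|$. For (i), the plan is to show that only bi-traces of polynomial length need be considered in a minimal fabrication: the key lever is that each strict step along a chain of clusters in the preorder $\lesssim$ forces a change in the $\F$- or $\P$-theory of its representatives, so the relevant chain length and hence bi-trace length can be polynomially bounded. For (ii), we plan to mirror the structural induction of Lemma~\ref{thm:surr}: shuffles strictly decrease the height of the associated surrectangle (itself controlled by the cluster-chain bound from (i)), while the joins and limits that appear between consecutive shuffles correspond to the $\approx$-class decomposition of Lemmas~\ref{lem:equiv}--\ref{lem:upper} and can be reorganised into a balanced tree of polynomial depth. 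The hard part will be making this reorganisation precise, particularly showing that the cascade of joins and limits within a single height stratum never needs to be deeper than a polynomial in $|\phi|$.

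Combining these ingredients yields a $\mathsf{PSPACE}$ decision procedure for satisfiability, and therefore for validity, in the irreflexive case. The reflexive case follows by the polynomial-time syntactic reduction given at the end of the proof of Theorem~\ref{thm:decidable}.
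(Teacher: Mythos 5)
Your hardness argument and your reduction of validity to satisfiability are fine, and your point (i) is in fact automatic: by the second condition in the definition of a bi-trace, each step strictly increases one of the two cluster chains, and cluster chains have length linear in $n=|\phi|$, so \emph{every} biboundary is storable in $\mathcal O(n^2)$ bits --- no appeal to ``minimal fabrications'' is needed. The genuine problem is point (ii). Your upper bound rests on an alternating polynomial-\emph{time} recursion (guess the last operation and its operands, universally recurse), and for that you must prove that every fabricated biboundary admits a fabrication derivation of depth polynomial in $n$. You flag this yourself as ``the hard part'' and do not supply it, but it is exactly where the substance of the theorem lies, and nothing in the surrectangle analysis gives it to you. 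In the decomposition underlying Lemma~\ref{thm:surr} and Lemma~\ref{lem:delta1}, the cascade of joins within a single height stratum is governed by geometric data --- the separation $\varepsilon$ between $\Gamma^-$ and $\Gamma^+$ and the size of the witnessing rectangle --- and is in no way bounded by a function of $n$. The paper never bounds this depth; instead it gives a \emph{nondeterministic polynomial-space} procedure (Algorithm~\ref{alg:fabricated}) in which the stratum-internal cascade is swallowed by tail recursion and space reuse, proves $S(N+1)\leq S(N)+\mathcal O(n^3)$ by induction on surrectangle height (itself $\mathcal O(n)$), and then invokes Immerman--Szelepcs\'enyi and Savitch. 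Space reuse is available to an $\mathsf{NPSPACE}$ machine but not to your alternating-time recursion, so the paper's argument does not transfer to your setting.

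Two further inaccuracies in your sketch compound this. First, it is not true that shuffle operands have strictly smaller height: in the decomposition of Lemmas~\ref{lem:equiv}--\ref{lem:upper} the biboundaries $\partial^{s\restr{R(E)}}$ may still have full height $N+1$; they are only ``simpler'' in that $\Gamma^-$ and $\Gamma^+$ meet $R(E)$ only on its boundary, after which Lemma~\ref{lem:delta3} applies a further limit/join step before the height actually drops. So your proposed induction measure needs repair. Second, your hoped-for ``reorganisation into a balanced tree'' would presumably proceed by a pumping argument (if two prefixes of a chain of joins yield the same intermediate biboundary, excise the middle), which could at best bound cascade length by the number of biboundaries, i.e.\ exponentially in $n$, giving logarithmic --- hence polynomial --- balanced depth; but making this work in the presence of two-dimensional grids of joins, limits (whose defining equation $\partial_0=(\partial_2\join_|\partial_3)\join_-(\partial_0\join_|\partial_1)$ re-uses $\partial_0$ itself), and shuffles is a nontrivial new argument that neither you nor the paper provides. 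As it stands, the $\mathsf{PSPACE}$ membership claim is not established by your proposal.
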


As mentioned in the introduction, for the reflexive frame, the validities of the purely \emph{modal} fragment of the basic temporal language form {\bf S4.2}, and for the irreflexive frame, {\bf OI.2}. These are both known to be $\mathsf{PSPACE}$-complete \cite{Shapirovsky2005-SHAOPI}, so the validity problems for the \emph{entire} basic temporal language are $\mathsf{PSPACE}$-hard.

We provide a nondeterministic polynomial space algorithm for satisfiability, for the irreflexive frame. Hence validity is in $\mathsf{coNPSPACE}$. By the Immerman--Szelepcs\'enyi theorem \cite{immerman88,Szelepcsenyi1988}, $\mathsf{coNPSPACE} = \mathsf{NPSPACE}$, and by Savitch's theorem \cite{SAVITCH1970177}, $\mathsf{NPSPACE} = \mathsf{PSPACE}$, giving the result. The reflexive case follows by the same reduction as before.

Throughout this section, let the length of $\phi$ be $n$. By structural induction, the length of a formula bounds the number of its subformulas, so the cardinality of $\Cl(\phi)$ is linear in $n$. Hence any maximal consistent set---a subset of $\Cl(\phi)$---can be stored using a linear number of bits, and $\abs\mcs$ is at most exponential in $n$. All pertinent information about any cluster $c$ can also be stored in a linear number of bits, for we only need record $\{\psi \in \Cl(\phi) \mid \exists m \in c : \psi \in m\}$. The maximal length of a chain of distinct clusters or irreflexive members of $\mcs$ is also linear in $n$. Hence any biboundary can be stored using a quadratic number of bits, and the number of biboundaries is exponential in $n$.

Having nondeterministically chosen a bit string representing some $m \subseteq \Cl(\phi)$---a putative maximal consistent set---the conjunct over all formulas in $m$  is quadratic in $n$, so its satisfiability can be determined in polynomial space \cite{Spaan1993}. Hence we can determine if a chosen bit string represents a maximal consistent set using polynomial space. 

We can similarly nondeterministically `guess' sets of the form $\{\psi \in \Cl(\phi) \mid \exists m \in c : \psi \in m\}$ for some cluster $c$, using polynomial space. To do this, first guess a maximal consistent set $m$ and check it is reflexive; keep $m$ in memory. Then one-by-one for each bit string representing some $m' \subseteq \Cl(\phi)$, check $m' \in \mcs$ and that $m \lesssim m' \lesssim m$. If so, add all elements of $m'$ to an ongoing collection of formulas, discarding $m'$ after each iteration.

From the preceding discussion, it is clear we can also determine if a chosen string of bits represents a \emph{biboundary} using polynomial space.

The algorithm for satisfiability of $\phi$ first nondeterministically chooses four bit strings and checks they represent four compatible biboundaries, with $\phi$ at the appropriate corners---performed using polynomial space.  Then for each of these in turn, it is checked whether the biboundary is fabricated. The remainder of the proof is devoted to showing that this check can be performed using polynomial space.
 The procedure to do this is shown in \Cref{alg:fabricated}.

\begin{algorithm}[H]
\caption{Nondeterministic procedure to decide whether $\partial$ is fabricated}\label{alg:fabricated}
\begin{algorithmic}
\Procedure{fabricated}{$\partial$}
{\bf choose} either

\State {\bf option} 0
\State \hspace*{\algorithmicindent}{\bf check} $\partial$ is ground;

\State {\bf option} 1
\State \hspace*{\algorithmicindent}{\bf choose} $\partial_1, \partial_2$; {\bf check} they are biboundaries

\State \hspace*{\algorithmicindent}{\bf check} their join ({\bf choose} some direction) is $\partial$; {\bf release} $\partial$

\State \hspace*{\algorithmicindent}{\bf check}  \Call {fabricated}{$\partial_1$}; {\bf tail-call}  \Call {fabricated}{$\partial_2$}

\State {\bf option} 2
\State \hspace*{\algorithmicindent}{\bf choose} $\partial_1, \partial_2, \partial_3, \partial_4$; {\bf check} they are biboundaries

\State \hspace*{\algorithmicindent}{\bf check} $\partial$ is the limit ({\bf choose} direction) of $\partial_1$ using $\partial_2, \partial_3, \partial_4$; {\bf release} $\partial$

\State \hspace*{\algorithmicindent}{\bf check}  \Call {fabricated}{$\partial_2}$,  \Call {fabricated}{$\partial_3$}; {\bf release} $\partial_2, \partial_3$; {\bf check}  \Call {fabricated}{$\partial_4$}; {\bf tail-call}  \Call {fabricated}{$\partial_1$}

\State {\bf option} 3
\State \hspace*{\algorithmicindent}{\bf choose} $k \in \{0, 1, \dots, n\}$, $\partial_1, \dots, \partial_k$; {\bf check} they are biboundaries

\State \hspace*{\algorithmicindent}{\bf choose} $m_1, \dots, m_n$; {\bf check} they are maximal consistent sets

\State \hspace*{\algorithmicindent}{\bf check} $\partial$ is the shuffle of $\partial_1, \dots, \partial_k$ using $m_1, \dots, m_n$; {\bf release} $\partial$

\State\hspace*{\algorithmicindent}{\bf for} $i=1, \dots, k$ {\bf do}

\State \hspace*{\algorithmicindent}\hspace*{\algorithmicindent}{\bf check}  \Call {fabricated}{$\partial_i$}
\State\hspace*{\algorithmicindent}{\bf end for}
\EndProcedure
\end{algorithmic}
\end{algorithm}

We assume that during execution of \Cref{alg:fabricated}, the formula $\phi$  is a global constant (and therefore is $n$ too). All choices are made nondeterministically. At a {\bf check} the algorithm fails if the check fails, otherwise it proceeds. A {\bf tail-call} uses tail recursion. The algorithm succeeds if it terminates without failure.

Clearly \Cref{alg:fabricated} can only succeed for a biboundary $\partial$ if $\partial$ is fabricated. Let $S(N)$ be the amount of space required by \Cref{alg:fabricated} to succeed for any fabricated biboundary of height $N$. We prove that $S(N) = \mathcal O((N+1) n^3 )$ (as a function of $N$ and $n$) by induction on $N$. Hence \Cref{alg:fabricated} requires space $\mathcal O(n^4)$ to check any biboundary.

 If $\partial$ is a fabricated biboundary of height $0$, then it is ground, so success can be achieved by taking the first branch, requiring space $\mathcal O(n^2)$.

 For the inductive step, we show that $S(N+1) \leq S(N) + \mathcal O(n^3)$. The proof has a similar structure to the proof in the previous section. Let $\partial$ be a fabricated biboundary of height $N+1$, and $s$ be a surrectangle such that $\partial^s = \partial$. Define $\Gamma^-$, $\Gamma^+$, $\vec l$, and $\vec r$ as in the previous section. Let $\delta$ be the space required to store any biboundary. Thus $\delta$ is a quadratic function of $n$, independent of $N$.

As always, proofs of lemmas are relegated to the appendix.

\begin{lemma}\label{lem:delta1}
 If $\Gamma^-$ and $\Gamma^+$ are disjoint, the space required is bounded by $S(N) + \delta$.
\end{lemma}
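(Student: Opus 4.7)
The plan is to have \Cref{alg:fabricated} follow the recursive construction of $\partial^s$ supplied by the proof of Lemma~\ref{lem:disjoint}: when $\Gamma^-$ and $\Gamma^+$ are disjoint, one exhibits $\partial^s$ as a single (vertical or horizontal) join $\partial^{s_1} \join \partial^{s_2}$ of the biboundaries of two sub-surrectangles $s_1, s_2$ obtained by cutting $s$ along an axis-aligned line passing strictly between $\Gamma^-$ and $\Gamma^+$ (intersecting each at most in a single point). Because such a cut breaks every chain from $c^-$ to $c^+$, the upper cluster of $s_1$ is strictly below $c^+$ and the lower cluster of $s_2$ is strictly above $c^-$; hence any chain witnessing the height of $s_i$ extends (by appending $c^+$ or prepending $c^-$) to a strictly longer chain inside $s$, so both $s_1$ and $s_2$ have height at most $N$. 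By the outer inductive hypothesis on $N$, their biboundaries $\partial^{s_i}$ are fabricated and each can be verified by \textsc{fabricated} in space at most $S(N)$.

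\Cref{alg:fabricated} then succeeds on $\partial = \partial^s$ by nondeterministically choosing Option~1: it guesses bit strings for $\partial_1 = \partial^{s_1}$ and $\partial_2 = \partial^{s_2}$, verifies that these are biboundaries and that their join in the appropriate orientation equals $\partial$, releases $\partial$, recursively invokes \textsc{fabricated}$(\partial_1)$, and finally tail-calls \textsc{fabricated}$(\partial_2)$. The biboundary- and join-checks use only polynomial scratch, which is absorbed into the recursive space allowance.

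The space analysis runs as follows. The peak is reached during the recursive call on $\partial_1$: only $\partial_2$ (of size $\delta$) is held alive in the enclosing frame, while the call itself fits in $S(h_1) \leq S(N)$ by the inductive hypothesis; the subsequent tail-call on $\partial_2$ reuses the current frame and likewise fits in $S(h_2) \leq S(N)$. Maximising over phases yields the desired bound $S(N) + \delta$. The main subtlety to anticipate is coordinating with the geometry of Lemma~\ref{lem:disjoint}: verifying that a suitable axis-aligned cut always exists when $\Gamma^-$ and $\Gamma^+$ are disjoint compact $\triangleleft$-chains in $R$, and confirming that the resulting sub-surrectangles really have strictly smaller heights. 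Both facts are outputs of that lemma's construction and will be applied here as a black box.
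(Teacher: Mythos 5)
Your core decomposition claim does not hold: a single axis-aligned cut cannot, in general, be positioned so that \emph{both} resulting sub-surrectangles have height at most $N$. The sets $\Gamma^-$ and $\Gamma^+$ are $\triangleleft$-chains running roughly from northwest to southeast; being disjoint does \emph{not} mean some vertical or horizontal line has $\Gamma^-$ entirely on one side and $\Gamma^+$ entirely on the other. For example, $\Gamma^-$ might run from $(0,0.6)$ to $(0.6,0)$ while $\Gamma^+$ runs from $(0.4,1)$ to $(1,0.4)$ inside $[0,1]^2$: every vertical line with $0.4 < x < 0.6$ crosses both curves, and likewise every horizontal line with $0.4 < y < 0.6$. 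Cutting vertically at $x=0.5$ then yields a left piece whose northeast corner $(0.5,1)$ lies northeast of $\Gamma^+$ and whose southwest corner lies southwest of $\Gamma^-$, so this piece has lower cluster $c^-$ and upper cluster $c^+$ and hence full height $N+1$. Your intermediate claim that ``such a cut breaks every chain from $c^-$ to $c^+$'' therefore fails, as does the conclusion that both $s_1$ and $s_2$ have reduced height, and the resulting space bound $S(N)+\delta$ is not justified by your argument. (You also attribute to Lemma~\ref{lem:disjoint} a single-join decomposition; that lemma actually uses a finite grid of small cells.)

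The paper's proof handles exactly this obstruction. Using $\varepsilon = \operatorname{dist}(\Gamma^-, \Gamma^+) > 0$, it proves a dichotomy: either $\Gamma^-$ stays at least $\varepsilon/\sqrt 2$ from the north edge, or $\Gamma^+$ stays at least $\varepsilon/\sqrt 2$ from the west edge. Taking, say, the latter, it peels off the strip $s_1$ of width $\varepsilon/\sqrt 2$ adjacent to the west edge; this strip misses $\Gamma^+$, so $s_1$ provably has height $\leq N$ and can be checked in $S(N)$ space. The remaining piece $s_2$ \emph{may still have height $N+1$}, but it is strictly narrower and still has $\Gamma^-$ and $\Gamma^+$ separated by $\varepsilon$, so the same peeling step can be repeated. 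The crucial observation --- entirely absent from your write-up --- is that the call on $s_2$ is a tail call, so the repeated peeling does not accumulate stack frames: the space stays at $S(N)+\delta$ per iteration, and the iteration must eventually reduce the height of the remaining piece. Your analysis, by assuming both pieces have reduced height, skips precisely the part of the argument that the tail recursion is there to handle.
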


\begin{lemma}\label{lem:delta3}
If $\Gamma^-$ and $\Gamma^+$ intersect only on the boundary of $s$, the space required is bounded by $S(N) + 3  \delta$. 
\end{lemma}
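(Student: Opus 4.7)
The plan is to adapt the argument of Lemma~\ref{lem:delta1}, replacing the join decomposition with a limit decomposition (option 2 of \Cref{alg:fabricated}).

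First, I would establish a topological--combinatorial decomposition: when $\Gamma^-$ and $\Gamma^+$ meet only on the boundary of $s$, the biboundary $\partial^s$ can be written as a southeastern or northwestern limit of a ``seed'' biboundary $\partial_0$ using three ``filler'' biboundaries $\partial_1, \partial_2, \partial_3$, where $\partial_0 = \partial^{s_0}$ for some surrectangle $s_0$ whose $\Gamma^-$ and $\Gamma^+$ are disjoint (so that Lemma~\ref{lem:delta1} applies to $\partial_0$), and each $\partial_i = \partial^{s_i}$ for some surrectangle $s_i$ of height at most $N$ (so that the inductive hypothesis applies to $\partial_i$). The intuition is that because $\Gamma^-$ and $\Gamma^+$ touch only at a boundary point $\vec p$ of $s$, one can zoom toward $\vec p$ through a nested sequence of subrectangles; the seed $s_0$ is the portion of $s$ complementary to this zoom (where the two curves are separated), and the three fillers correspond to the incremental ``rings'' of the zoom, each of which omits either $I^-$ or $I^+$ entirely and so has a strictly shorter chain of clusters than $s$.

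Next, I would apply \Cref{alg:fabricated} with option 2 to this decomposition and track its peak space. After nondeterministically choosing the four component biboundaries and verifying the limit relation, the algorithm holds them all (contributing $4\delta$) and releases $\partial$. The sequential recursive calls on the first two fillers each peak at $3\delta + S(N)$: three biboundaries remain in outer storage while the inner call consumes at most $S(N)$ space, with the input biboundary shared between the outer and inner frames. Once those two are released, the recursive call on the third filler peaks at $\delta + S(N)$, and finally the tail-call on the seed (which reuses the outer frame) peaks at $S(N) + \delta$ by Lemma~\ref{lem:delta1}. Combining these, the overall peak is $\max(3\delta + S(N), S(N) + \delta) = S(N) + 3\delta$, as required.

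The main obstacle is verifying the decomposition claim. It requires careful inspection of how $\Gamma^-$ and $\Gamma^+$ approach their boundary meeting point(s), and choosing the nested subrectangles so that (i) the seed surrectangle really does have disjoint $\Gamma^\pm$, (ii) each filler surrectangle omits either $I^-$ or $I^+$ entirely (so its height is strictly less than that of $s$), and (iii) the resulting biboundaries fit together exactly according to the definition of southeastern (or, dually, northwestern) limit in \Cref{section_biboundary}. This closely parallels the topological analysis underlying Lemma~\ref{lem:disjoint} and the limit constructions exploited in \Cref{from_biboundary}.
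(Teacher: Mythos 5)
Your overall strategy — decompose via option 2 (a limit), with a seed handled by Lemma~\ref{lem:delta1} and fillers handled by the inductive hypothesis, then track peak space through the limit branch of Algorithm~\ref{alg:fabricated} — is the same strategy the paper uses, and your final space accounting reaches the correct bound. However, there are three gaps, the first two of which are genuine.

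First, your opening claim that $\partial^s$ ``can be written as a southeastern or northwestern limit'' of a seed using three fillers is not correct as stated. A southeastern (resp.~northwestern) limit, by definition, zooms towards the corner $\vec r$ (resp.~$\vec l$); the recursion equation $\partial_0 = (\partial_2 \join_| \partial_3) \join_- (\partial_0 \join_| \partial_1)$ tiles the rectangle towards that particular corner. If $\Gamma^-$ and $\Gamma^+$ touch at a point $\vec p$ in the interior of an edge rather than at a corner, the biboundary of $s$ is not a limit. The paper's proof first applies the iterated-join technique from Lemma~\ref{lem:delta1} to strip away the portion of $s$ beyond $\vec p$, so that after finitely many tail-recursive joins the contact point becomes $\vec l$ or $\vec r$; only then does it invoke a limit. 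Without this preliminary reduction your decomposition need not exist.

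Second, you tacitly assume $\Gamma^- \cap \Gamma^+$ is a single boundary point $\vec p$. Since each of $\Gamma^-, \Gamma^+$ meets the boundary of the rectangle only at its two endpoints, and the hypothesis forces the intersection to lie in that endpoint set, they may meet at two boundary points (one at the north-western end and one at the south-eastern end). The paper treats this case with an extra layer: it peels off (by joins) a subrectangle in which the curves meet at only one boundary point, reducing to the one-point case. This second case must be addressed.

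Third, and more minor: you assert that each filler ``omits either $I^-$ or $I^+$ entirely and so has a strictly shorter chain of clusters.'' This is true for the vertical and horizontal strips, but typically fails for the corner piece $\partial^{s\restriction_{[\vec x \wedge \vec y, \vec x \vee \vec y]}}$: its south-west corner $\vec x \wedge \vec y$ generally lies in $I^-$ and its north-east corner $\vec x \vee \vec y$ in $I^+$, so the corner piece has full height. It is, however, bounded away from $\vec r$ and hence has $\Gamma^-$, $\Gamma^+$ disjoint; the paper therefore charges it at $S(N)+\delta$ by Lemma~\ref{lem:delta1}, for a step peak of $S(N)+2\delta$ rather than your claimed $S(N)+\delta$. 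The final bound $S(N)+3\delta$ is unchanged, but the justification should be corrected.
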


As in the previous section, let $P = \set{\vec l, \vec r}\cup(\Gamma^- \cap\Gamma^+)$. Additionally, let $\Delta = \{\partial^{s\restr{[\vec x \wedge \vec y, \vec x \vee \vec y]}} \mid \vec x, \vec y \in P, \text{ and $\vec y$ is an immediate $\triangleleft$-successor of $\vec x$}\}$.

\begin{lemma}\label{shuffle_bound}
If the biboundary of $s$ is a shuffle of a subset $\Delta'$of $\Delta$, the space required is bounded by $S(N) + 3  \delta + n \delta$. 
\end{lemma}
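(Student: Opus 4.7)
The plan is to analyze the space usage of option~3 of Algorithm~\ref{alg:fabricated} when applied to $\partial = \partial^s$, exploiting the shuffle hypothesis. There are essentially three things to verify: (i)~the algorithm succeeds with the permitted $k \leq n$ witness biboundaries and $n$ maximal consistent sets; (ii)~each chosen witness $\partial_i$ has height at most $N$ so that the inductive hypothesis applies; and (iii)~the peak space over the whole execution fits within $S(N) + 3\delta + n\delta$.

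For (i), I would observe that although the subset $\Delta' \subseteq \Delta$ and the accompanying $M \subseteq \mcs$ witnessing the shuffle may be arbitrarily large, Definition~\ref{shuffle}'s condition~\eqref{shuffle_defects} and its dual are existentially quantified over the future/past defects of $\partial(-)$ and $\partial(+)$, of which there are at most $|\Cl(\phi)| = O(n)$. Hence we can thin $\Delta'$ to a subcollection $\{\partial_1, \dots, \partial_k\}$ with $k \leq n$, and select a list $m_1, \dots, m_n$ from $M$ (with repetitions allowed, to fill the $n$ slots demanded by the algorithm) such that every defect still has the required witness; the remaining universal conditions~\eqref{fits} and~\eqref{one-point} only become easier to satisfy upon shrinking. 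For (ii), each $\partial_i$ has the form $\partial^{s\restr{[\vec x \wedge \vec y, \vec x \vee \vec y]}}$ with $\vec x, \vec y \in P$ consecutive in the linear $\triangleleft$-order on $P$. Using the defining property $P \cap ({\down I^-} \cup {\up I^+}) = \emptyset$, combined with the preservation of $\lesssim$ by $f$, I would argue that the lower cluster $f^+(\vec x \wedge \vec y)$ of the subsurrectangle lies strictly above $c^-$ in the cluster order, or dually $f^-(\vec x \vee \vec y)$ lies strictly below $c^+$, forcing a strictly shorter chain from lower to upper cluster and hence height at most $N$.

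For (iii), the space accounting proceeds in two phases. During the initial setup and check, we hold in memory $\partial$ (size $\delta$), the witnesses $\partial_1, \dots, \partial_k$ (total size at most $n\delta$), the maximal consistent sets $m_1, \dots, m_n$ (total size $O(n^2)$, absorbed into $\delta$), plus $O(\delta)$ of working memory needed to verify each clause of Definition~\ref{shuffle}, for a peak of at most $(n+3)\delta$. After the check, $\partial$ and the $m_i$'s are released; the loop then invokes $\textsc{fabricated}(\partial_i)$ for each $i$ in turn, with only $\partial_1, \dots, \partial_k$ ($\leq n\delta$) retained between calls and at most $S(N)$ added by the recursive call itself, by the inductive hypothesis on height. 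The overall peak is $\max\bigl((n+3)\delta,\ n\delta + S(N)\bigr) \leq S(N) + 3\delta + n\delta$, as required. The main obstacle will be step~(ii): justifying the height reduction for subsurrectangles indexed by consecutive pairs in $P$, which requires a careful argument using the structure of $P$ and preorder preservation. The accounting itself is then routine bookkeeping.
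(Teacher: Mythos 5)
Your step (i) (thinning the shuffle witnesses to at most $n$ biboundaries and $n$ maximal consistent sets, because only the defect-passing clauses of Definition~\ref{shuffle} are existential and there are at most $n$ defects) is exactly the paper's argument. The gap is your step (ii), and it is not merely a technical obstacle: the height-reduction claim is false in general. For consecutive $\vec x \triangleleft \vec y$ in $P$, the points of $I^-$ (resp.\ $I^+$) excluded by the condition $P \subseteq R\setminus(\down I^- \cup \up I^+)$ are only those strictly in the future (resp.\ past) of $\vec x$ and $\vec y$; points just northeast of the southwest corner $\vec x \wedge \vec y$ are spacelike-separated from both $\vec x$ and $\vec y$, so nothing prevents them from lying in $I^-$, and dually at the northeast corner. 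Indeed the construction in the proof of Lemma~\ref{closed:shuffle} produces exactly this situation: the surrectangles $s_\partial$ inserted along the diagonal may themselves have lower cluster $c^-$ and upper cluster $c^+$, i.e.\ full height $N+1$. Shuffles are needed precisely because the inner pieces need not lose height, so an induction that charges each subcheck only $S(N)$ cannot close.

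What the paper uses instead is that for consecutive $\vec x, \vec y \in P$ the restricted surrectangle $s\restr{[\vec x \wedge \vec y, \vec x \vee \vec y]}$ has its $\Gamma^-$ and $\Gamma^+$ meeting only on its boundary (at the corners $\vec x$ and $\vec y$), so Lemma~\ref{lem:delta3} bounds each subcheck by $S(N) + 3\delta$ even when the sub-biboundary has full height; adding $n\delta$ to hold the up-to-$n$ stored witnesses during the loop gives $S(N) + 3\delta + n\delta$. Your final bound coincides numerically only because you attribute $3\delta$ to setup-phase storage while charging the recursive calls $S(N)$; once step (ii) fails, your accounting gives $S(N+1)$ per subcheck and the bound collapses. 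To repair the proof, replace the height-reduction claim by an appeal to Lemma~\ref{lem:delta3} for the elements of $\Delta'$, which is where the $3\delta$ term in the statement actually comes from.
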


If $\vec l$ and $\vec r$ are each the limit of elements of $P$ in the interior of $s$, then the biboundary of $s$ is the shuffle of $\Delta'$, using $M$, where $\Delta' = \{\partial^{s\restr{[\vec x \wedge \vec y, \vec x \vee \vec y]}} \mid \vec x, \vec y \in  P \setminus \{ \vec l, \vec r \}, \text{ and $\vec y$ is the successor of $\vec x$} \}$ and $M = P \setminus \{ \vec l, \vec r \}$.

Now suppose only one of $\vec l$, $\vec r$ is a limit of elements of $P$ in the interior of $s$. Say $\vec l$ is such a limit, but $\vec r$ is not. If $\vec r$ has a direct predecessor $\vec r'$, then $\vec r'$ is in the interior of $s$. If there are points of $P$ (strictly) due north of $\vec r$, let the northmost one be $\vec w$. Then either $\vec w$ is a limit, and using a join we can reduced to the case of the previous paragraph, or there is a point $\vec r'$ either an immediate predecessor or due west of $\vec w$. In the second case, $\vec r'$ is in the interior of $\vec s$. Similarly if there are points of $P$ due west of $\vec r$. In each case we obtain an $\vec r'$ in the interior of $P$ and the biboundary $\partial_{SE}$ of $s\restr{[\vec r \wedge \vec r', \vec r \vee \vec r']}$ can be checked in $S(N) + 3  \delta$, by Lemma~\ref{lem:delta3}.

We are going to obtain the biboundary of $s$ as the join of four biboundaries. In the southeast corner is $\partial_{SE}$. If the biboundary of $s\restr{[\vec l \wedge \vec r', \vec l \vee \vec r']}$ is of reduced height then we are done. Otherwise, the other three biboundaries we use are modifications of restrictions. For the northwest corner, take the biboundary of $s\restr{[\vec l \wedge \vec r', \vec l \vee \vec r']}$. Modify its south edge to the bi-trace $(c^-, c^-)$ and its east edge to $(c^+, c^+)$. This is still a biboundary; call it $\partial_{NW}$. For the northeast corner, take the biboundary of $s\restr{[\vec r', \vec t]}$ (which is ground). Modify its west edge to the bi-trace $(c^+, c^+)$. This is still a biboundary (and still ground). Similarly for the southwest corner.

The biboundary of $s$ is the join of these four biboundaries. It only remains to show $\partial_{NW}$ is fabricated, and that this can be checked efficiently. As $\partial_{NW}$ is of full height, its upper cluster is $c^+$ and lower $c^-$. By appealing to the facts that $s$ is a surrectangle and $s\restr{[\vec l \wedge \vec r', \vec l \vee \vec r']}$ is also a surrectangle, we can see that $\partial_{NW}$ is the shuffle of $\Delta'$, using $M$, where $\Delta' = \{\partial^{s\restr{[\vec x \wedge \vec y, \vec x \vee \vec y]}} \mid \vec x, \vec y \in (P \setminus \{ \vec l, \vec r \}) \cap [\vec l \wedge \vec r', \vec l \vee \vec r'], \text{ and $\vec y$ is the successor of $\vec x$} \}$, and $M =  (P \setminus \{ \vec l, \vec r \}) \cap [\vec l \wedge \vec r', \vec l \vee \vec r'] $. The set $\Delta'$ is a subset of $\Delta$, so by Lemma~\ref{shuffle_bound}, the biboundary $\partial_{NW}$ can be checked using no more than $S(N) + 3  \delta + n \delta$ space. Hence $\partial$ can been checked within $S(N) + \mathcal O(n^3)$, as promised.

The case where neither $\vec l$ nor $\vec r$ are limits is similar.

\section{Halpern--Shoham logic}\label{section_intervals}

In this section we explain how the $\mathsf{PSPACE}$ procedure for the temporal validities of $(\mathbb R^2, \prec)$ can be modified to procedures performing the same function for certain Halpern--Shoham logics of intervals on the real line.

In Halpern--Shoham logic, intervals are identified with pairs $(x, y)$ of points, with either $x < y$ (\defnn{strict interval semantics}) or $x \leq y$ (\defnn{non-strict semantics}).\footnote{Thus there is no distinction between open, half-open, and closed intervals, and unbounded intervals are not present.} There are thirteen different atomic relations that may hold between two strict intervals. Following Allen \cite{Allen:1983:MKT:182.358434}, we call these $\mathtt{equals}$, $\mathtt{before}$, $\mathtt{after}$, $\mathtt{during}$, $\mathtt{contains}$, $\mathtt{starts}$, $\mathtt{started\_by}$, $\mathtt{finishes}$, $\mathtt{finished\_by}$, $\mathtt{meets}$, $\mathtt{met\_by}$, $\mathtt{overlaps}$, and $\mathtt{overlapped\_by}$.\footnote{When point-intervals are present, pairs of intervals of the form $((x, x), (x, y))$ for $x<y$ are considered by Halpern and Shoham to stand in the relation $\mathtt{starts}$ (and not in $\mathtt{meets}$). Similarly for $\mathtt{started\_by}$, $\mathtt{finishes}$, and $\mathtt{finished\_by}$.} Modalities that may be included in a Halpern--Shoham logic are any corresponding to a relation given by the union of some of these thirteen.

Let $H_<$ be the open half-plane $\{(x, y) \in \mathbb R^2 \mid x < y\}$. The frame $(H_<, \prec)$ is precisely the frame of strict intervals of $\mathbb R$ with the relation $\mathtt{overlaps} \cup \mathtt{meets} \cup \mathtt{before}$. Hence the temporal logic of $(H, \prec)$ is the strict Halpern--Shoham logic of $\mathbb R$ with two modalities corresponding to this relation and its converse.  
As in \cite{Goldblatt1980}, it can be shown that an arbitrary finitely generated directed partial order is a $p$-morphic image of  the reflexive closure of $(H_<,\prec)$. Hence an arbitrary  purely modal formula (not using the past modality)  is in {\bf S4.2} if and only if it is valid for the reflexive closure of $(H_<, \prec)$,
and hence the validity of temporal formulas  over this frame is {\sf PSPACE}-hard. The $\mathsf{PSPACE}$-hardness of irreflexive $(H_<, \prec)$ itself, follows.

A \defnn{surtriangle} is similar to a surrectangle, but the domain of the core map $f$ is either $\set{(x, y) \in H_< \mid  a\leq x,\; y\leq  b}$, for some $a<b$, or a similar set where either $a\leq x$ is replaced by $a<x$, or $y\leq b$ is replaced by ${y<b}$, or both. A finite sequence of upper supplementary clusters and transition points are defined along $\set{(x, b) \in H_< \mid a<x<b}$ if contained in the domain of $f$, and similarly for $\set{(a, y) \in H_< \mid a<y<b}$ with lower supplementary clusters. 

  A biboundary is called 
\defnn{triangular} if its domain is disjoint from $\set{b, t, r,\allowbreak S, E}$.  A surtriangle determines a triangular biboundary in the obvious way.    If $\tau_1$ is a triangular biboundary containing its northern edge, $\tau_2$ a  triangular biboundary containing its western edge, and  $\partial$ a (rectangular) biboundary closed on its southern and eastern edges,
such that $\tau_1(N)=\partial(S)$ and $\partial(W)=\tau_2(E)$, then the join $J(\tau_1, \partial, \tau_2)$ is the triangular biboundary formed by joining the three parts together. (The inconsequential southeast corner of $\partial$ is discarded.)  A triangular biboundary is \defnn{fabricated} if it is either ground, or the join of a ground triangular biboundary, a fabricated rectangular biboundary, and a triangular biboundary of strictly smaller depth.  The proofs of the following  lemmas and theorem are similar to the proofs of Lemma~\ref{main_lemma}, Theorems~\ref{thm:decidable}, and Theorem~\ref{pspace}.
\begin{lemma}
A triangular biboundary is fabricated if and only if it is the triangular biboundary of some surtriangle.
\end{lemma}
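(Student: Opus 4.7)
The argument parallels that of Lemma~\ref{main_lemma}, adapting each direction to the triangular setting. The ``if'' direction proceeds by induction on the construction of fabricated triangular biboundaries. In the base case, given a ground triangular biboundary $\tau$, I build a surtriangle whose core map fills the triangular domain densely with a single cluster realising $\tau(-) = \tau(+)$ together with suitable maximal consistent sets at the transition points on the closed edges; the verification is a direct triangular analogue of Lemma~\ref{closed:ground}. For the inductive step, suppose $\tau = J(\tau_1, \partial, \tau_2)$ with $\tau_1$ ground, $\partial$ a fabricated rectangular biboundary, and $\tau_2$ a fabricated triangular biboundary of strictly smaller depth. By the base case there is a surtriangle $t_1$ for $\tau_1$; by Lemma~\ref{main_lemma} there is a surrectangle $s$ for $\partial$; and by the inductive hypothesis there is a surtriangle $t_2$ for $\tau_2$. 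After rescaling the three pieces with (the obvious analogue of) Lemma~\ref{transformation} so that their shared edges align in the plane, I glue $t_1$, $s$, $t_2$ along these edges. The verification that the result is a surtriangle inducing $\tau$ is essentially Lemma~\ref{closed:join} transferred to the triangular setting.

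For the ``only if'' direction, I induct on the depth of the surtriangle $t$. If $t$ has depth $0$ the core map takes values in a single cluster, so the induced triangular biboundary is ground. Assume the claim for depth at most $N$ and let $t$ have depth $N+1$. Following Section~\ref{from_surrectangle}, I form the sets $I^-, I^+$, $\Gamma^-, \Gamma^+$ inside $t$ and use the analogues of the relations $\equiv$ and $\approx$ (with joins, limits and shuffles being absorbed into \emph{fabricated rectangular} biboundaries via Lemma~\ref{main_lemma}) to locate a point on or adjacent to the diagonal hypotenuse at which $t$ decomposes as $J(t_1, s, t_2)$, where $t_1$ is a ground sub-surtriangle (its core map being constant in a neighbourhood of one hypotenuse corner), $s$ is a surrectangle, and $t_2$ is a surtriangle of strictly smaller depth. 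Then by Lemma~\ref{main_lemma} the biboundary of $s$ is fabricated, by induction the triangular biboundary of $t_2$ is fabricated, and by the definition of fabricated triangular biboundary the triangular biboundary of $t$ is itself fabricated.

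The main obstacle, as in Section~\ref{from_surrectangle}, is the inductive step of the reverse direction: I must isolate a cut point on the diagonal at which the piece being peeled off is genuinely ground and the remaining triangular piece has strictly smaller depth. The hypotenuse is a limit of the domain rather than a proper closed edge of a surtriangle, so a little care is required to define $f^+, f^-$ and the sets $\Gamma^\pm$ in a neighbourhood of it and to check that the geometric and topological arguments underlying Lemmas~\ref{lem:disjoint}--\ref{lem:upper} still go through. Everything strictly above the diagonal is locally rectangular, however, so the convexity, closure, and equivalence-class arguments of Section~\ref{from_surrectangle} transfer with only minor bookkeeping changes, and the fabrication rule for triangular biboundaries (ground corner, rectangular middle, depth-reduced triangular remainder) is precisely what the resulting decomposition produces.
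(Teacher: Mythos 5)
Your proposal matches the approach the paper indicates (the authors only state that the proof is ``similar to the proofs of Lemma~\ref{main_lemma}, Theorems~\ref{thm:decidable}, and Theorem~\ref{pspace}''): the ``if'' direction by structural induction on the fabrication rule, constructing a surtriangle by gluing a ground sub-surtriangle, a surrectangle supplied by Lemma~\ref{main_lemma}, and a depth-reduced surtriangle; and the ``only if'' direction by induction on depth, cutting along the hypotenuse so that limits and shuffles are absorbed into the fabricated rectangular middle piece. The plan is sound and at the same level of detail as the paper itself provides; the one point worth making explicit is that the ground triangular piece $t_1$ must be taken up to (and including) the first cluster change along the hypotenuse, so that $t_1$ is ground while the remaining triangular piece $t_2$ genuinely has strictly smaller depth.
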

\begin{lemma}
There is a {\sf PSPACE} algorithm to determine whether a triangular biboundary is fabricated.
\end{lemma}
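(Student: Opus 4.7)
The plan is to mirror \Cref{alg:fabricated} but apply it to the (considerably simpler) recursive structure of fabricated triangular biboundaries. First I would verify that a triangular biboundary---which is just a biboundary in the sense of the paper with its $b,t,r,S,E$ entries undefined---still fits into $\mathcal O(n^2)$ bits for the reasons already laid out in \Cref{section_pspace}: clusters are representable in linearly many bits, the bi-traces along the at-most-two defined edges ($N$ and $W$) have linear length, and checking whether a guessed bit string really represents a triangular biboundary can be carried out in $\mathsf{PSPACE}$ by the same subroutines that test membership in $\mcs$ and compute clusters.

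Next I would write a nondeterministic procedure \textsc{t-fabricated} offering two options for the input $\tau$. Option~$0$ checks directly that $\tau$ is ground. Option~$1$ nondeterministically guesses a ground triangular biboundary $\tau_1$, a rectangular biboundary $\partial$, and a triangular biboundary $\tau_2$; verifies that each is a well-formed biboundary, that $J(\tau_1,\partial,\tau_2)=\tau$, and that $\tau_1$ is ground; releases $\tau$ and $\tau_1$; runs the procedure \textsc{fabricated} of \Cref{alg:fabricated} on $\partial$, which by the proof of \Cref{pspace} terminates in $\mathcal O(n^4)$ space; releases $\partial$; and finally tail-calls \textsc{t-fabricated} on $\tau_2$.

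The space analysis is then immediate. Apart from the rectangular subcall, each activation holds at most four biboundaries simultaneously, contributing $\mathcal O(n^2)$. The rectangular subcall reuses this working memory and adds $\mathcal O(n^4)$. Because the recursive triangular call is in tail position, stack frames do not accumulate, so the total space remains $\mathcal O(n^4)$. Termination along any computation branch is ensured by the strictly decreasing depth of the triangular argument, which is bounded by the longest chain of clusters and hence linear in $n$; correctness is immediate from the definition of fabricated triangular biboundary combined with the correctness of \textsc{fabricated} established in the previous section.

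The main obstacle, such as it is, is purely bookkeeping: confirming that the join check $J(\tau_1,\partial,\tau_2)=\tau$, the groundness check for $\tau_1$, and the invocation of the rectangular procedure can each be executed directly from the encoded data without maintaining extra state, and that the compatibility conditions on the northern edge of $\tau_1$ and the western edge of $\tau_2$ with the corresponding edges of $\partial$ can likewise be verified using only linearly many bits at a time. Since the conceptual work has already been done in \Cref{section_pspace}, I would present this lemma essentially as a transcription of \Cref{alg:fabricated} retaining only Option~$0$ and a streamlined Option~$1$ reflecting the simpler recursive clause for triangular biboundaries, followed by the space accounting above.
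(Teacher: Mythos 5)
Your proposal is correct and takes essentially the approach the paper intends: the paper omits the proof, stating only that it is ``similar to'' the proofs of Lemma~\ref{main_lemma} and Theorems~\ref{thm:decidable} and~\ref{pspace}, and your procedure---Option 0 for groundness, Option 1 guessing $J(\tau_1,\partial,\tau_2)=\tau$, delegating $\partial$ to the rectangular \textsc{fabricated} routine, and tail-calling on the strictly shallower $\tau_2$---is exactly the natural transcription of the paper's recursive definition of fabricated triangular biboundary, with the correct $\mathcal O(n^4)$ space accounting.
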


\begin{theorem}
Let $H_< = \{(x, y) \in \mathbb R^2 \mid x < y\}$, and let $R$ be either the relation given by $(x, y) R (x', y') \iff x < x' \text{ and } y < y'$ (so $R = \mathtt{overlaps} \cup \mathtt{meets} \cup \mathtt{before}$) or the reflexive closure of this relation (so $R=\mathtt{equals}\cup \mathtt{overlaps} \cup \mathtt{meets} \cup \mathtt{before}$). Then the temporal logic of $(H_<, R)$ is $\mathsf{PSPACE}$-complete.
\end{theorem}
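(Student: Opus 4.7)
The plan mirrors the architecture of Theorem~\ref{pspace}: prove $\mathsf{PSPACE}$-membership via a nondeterministic procedure whose correctness rests on a correspondence between fabricated (triangular or rectangular) biboundaries and surtriangles (resp.\ surrectangles). The $\mathsf{PSPACE}$-lower bound is already secured in the paragraph introducing surtriangles via the $p$-morphic reduction of $\mathbf{S4.2}$, so only the upper bound requires work.

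First I would prove the stated correspondence lemma, that a triangular biboundary is fabricated if and only if it arises from some surtriangle. The forward direction proceeds by induction on the height of the surtriangle, mirroring Section~\ref{from_surrectangle}. The essentially new case is the $J$-join: when a surtriangle of height $N+1$ is not already a shuffle, one should cut along a horizontal and a vertical line meeting in its interior, so as to carve off a smaller triangular region $\tau_2$ along the hypotenuse in the southwest (containing its west edge), a rectangular middle region $\partial$ (closed on its south and east edges), and a triangular region $\tau_1$ in the northeast (containing its north edge); by the rectangular Lemma~\ref{main_lemma} together with the induction hypothesis, all three ingredients are fabricated, so their $J$-composite is. The reverse direction mirrors Section~\ref{from_biboundary}: one handles ground, join, limit, shuffle, and $J$-join constructors by explicit gluing of surtriangles, reusing Lemmas~\ref{closed:ground}--\ref{closed:shuffle} whenever the rectangular ingredient of a $J$-join appears.

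Next I would reduce satisfiability of $\phi$ over $(H_<, \prec)$ to the existence of a matching collection of fabricated biboundaries. Fix a target point $\vec x = (a, b) \in H_<$ at which $\phi$ is to hold. The horizontal and vertical lines through $\vec x$ split $H_<$ into four pieces: the northwest piece is an honest quadrant $(-\infty, a] \times [b, \infty)$, a genuinely rectangular surrectangle domain; the northeast and southwest pieces are unbounded regions clipped by the diagonal, each an (unbounded) surtriangle domain handled via an analogue of Lemma~\ref{transformation} if one prefers to reduce to bounded; and the southeast piece is the bounded surtriangle with vertices $(a, a), (a, b), (b, b)$. Satisfiability of $\phi$ on $(H_<, \prec)$ is then equivalent to the existence of one fabricated rectangular biboundary and three fabricated triangular biboundaries, agreeing on the common horizontal and vertical edges at $\vec x$, and with $\phi$ appearing inside the maximal consistent set assigned to the shared corner.

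Finally, the $\mathsf{PSPACE}$ algorithm nondeterministically guesses these four biboundaries, checks the matching condition in polynomial space, and then for each one calls the stated $\mathsf{PSPACE}$ subroutine for fabrication, which is a minor modification of Algorithm~\ref{alg:fabricated} augmented with one extra recursive branch for the $J$-join; the space accounting of Section~\ref{section_pspace} transplants directly. The reflexive case reduces to the irreflexive case by the same substitution as in Theorem~\ref{thm:decidable}. The main obstacle I anticipate is the geometric bookkeeping inside the correspondence lemma, namely verifying that the $J$-join captures exactly the gluing of a triangle--rectangle--triangle along interior axis-aligned cuts while preserving all supplementary clusters, transition points, and bi-trace matching at the two shared internal edges; once that is done, the arguments of Sections~\ref{from_biboundary}--\ref{section_pspace} transplant essentially verbatim.
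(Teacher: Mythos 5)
Your overall architecture is the paper's own (the paper only sketches this proof, delegating everything to the surtriangle/triangular-biboundary machinery it sets up), but two of your steps do not hold up as stated. First, the satisfiability reduction: of the four pieces into which the horizontal and vertical lines through $\vec x=(a,b)$ cut $H_<$, only the northwest quadrant and the bounded southeast triangle are of the required kinds. The northeast piece $\{(x,y)\in H_< \mid a\le x,\ b\le y\}$ is bounded by a west edge, a \emph{south} edge and the diagonal, and is unbounded along the diagonal (dually for the southwest piece). These are not surtriangle domains --- triangular biboundaries are undefined on $S$ and $E$ --- and no analogue of Lemma~\ref{transformation} repairs this: to preserve $H_<$ and the diagonal a transformation must be of the form $(x,y)\mapsto(g(x),g(y))$ for a single order-automorphism $g$ of $\mathbb R$, and such maps preserve both the shape type and the unboundedness. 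The fix is to transport satisfiability from $(H_<,\prec)$ to a bounded open triangle via $g\times g$ with $g\from\mathbb R\to(0,1)$ an order-isomorphism first, and then split that triangle at the witness point; the resulting northeast and southwest parts each decompose into a rectangle plus a genuine surtriangle, and $\phi$ sits at a shared corner so that it is recorded in the biboundary data.

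Second, and more substantively, your forward direction of the correspondence lemma cuts a height-$(N{+}1)$ surtriangle into triangle--rectangle--triangle and invokes the induction hypothesis for \emph{both} triangular pieces. But a piece hugging the hypotenuse can retain full height: for a core map taking a lower cluster where $x+y<1$ and an upper cluster where $x+y>1$, a generic cut point $(c,c)$ leaves at least one diagonal-hugging piece of undiminished height, so the induction does not go through for an arbitrary cut, and you give no rule for choosing one. This is exactly what the paper's definition of fabricated triangular biboundary legislates: one triangular component must be \emph{ground} and the other of \emph{strictly smaller depth}, with all full-height content absorbed by the rectangular component, which Lemma~\ref{main_lemma} handles at any height; the forward direction then has to argue that the cut along the diagonal can be chosen (essentially at the first point where the cluster seen near the diagonal increases) so that these side conditions hold. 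Your additional triangular ``limit'' and ``shuffle'' constructors are not part of the paper's recursion and, if you do add them, the claimed lemma is no longer the paper's and the space accounting for the extra branches must be redone rather than ``transplanted''. (Minor points: the two cut lines meet on the hypotenuse, not in the interior, and your $\tau_1$/$\tau_2$ labels are swapped relative to $J(\tau_1,\partial,\tau_2)$ --- though the paper's displayed condition $\partial(W)=\tau_2(E)$ is itself presumably a typo for $\partial(E)=\tau_2(W)$.)
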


\Appendix{}

\textbf{Proof of Lemma~\ref{closed:ground} (ground biboundaries)}
Let $R$ be the appropriate rectangle (given $\partial$) with $(0, 1) \times (0, 1) \subseteq R \subseteq [0, 1] \times [0,1]$. Define $f \from R \to \mcs$ as filling the interior of $R$ densely with the cluster $\partial(+)$. Define $f$ on any corners of $R$ in the evident way.

 If $\partial$ is defined on $N$, with $\partial(N) = (c_0^-, c_0^+, b_1, \dots, c_n^+)$ say, then define $f(\frac{i} {n+1}, 1)= b_i$ for each $i$, and fill each line segment $((\frac{i} {n+1}, 1), (\frac{i+1} {n+1}, 1))$ densely with the interpolant of $c_i^-$ and $c_i^+$. Define the supplementary clusters for this edge to be $c_0^+, \dots, c_n^+$, and the sequence of transition points to be $(\frac {1}{n+1},1), \dots, (\frac{n}{n+1}, 1)$. Similarly if $\partial$ is defined on $S, E$, or $W$.

The conditions on biboundaries in general, together with those for ground fabricated ones in particular, ensure our construction satisfies all conditions necessary to be a surrectangle.

\smallskip

\noindent\textbf{Proof of Lemma~\ref{closed:join} (joins)}  
We know the bi-trace obtained from the northern edge of $s_1$ equals that obtained from the southern edge of $s_2$, and in particular these edges have the same number of transition points. Then by Lemma~\ref{transformation}, we may assume these two edges and their cluster pairs, transition points, transition values, and values of $f$ at any end-points coincide. As we remarked in the opening of \Cref{from_biboundary}, this is sufficient to assume $s_1$ and $s_2$ agree on the common edge.

We define $s$ with domain the union of the domains of $s_1$ and $s_2$ in the obvious way. Then it is straightforward to check that $s$ forms a surrectangle and that $\partial^s = \partial_1 \join_-\partial_2$.

The proof for horizontal joins is completely analogous.

\smallskip

\noindent\textbf{Proof of Lemma~\ref{closed:limit} (limits)}  Since $\partial^*$ is a southeastern limit of $\partial_0$ using $\partial_1, \partial_2, \partial_3$, the vertical join of $\partial_2, \partial_0$ is defined, and similarly for other joins.  
\Cref{fig:limit}(a) illustrates how a surrectangle for $\partial^*$ can be constructed using transformed copies of the surrectangles $s_0, s_1, s_2, s_3$.  The domain of the surrectangle is a rectangle with interior $(0, 1)\times (0, 1)$.  
The upper-left quadrant $(0, \frac 12)\times (\frac 12, 1)$  together with boundaries dictated by $\partial_0$ is a copy of $s_0$.  
Each rectangle $( 0, 1-\frac{1}{2^k})\times (\frac{1}{2^{k+1}}, \frac{1}{2^k})$, plus appropriate boundaries, is a copy of $s_2$, for $k\geq 1$.     Use Lemma~\ref{transformation} to ensure agreement along common boundaries.
Transformed copies of $s_1$ are used dually.   
The square $[1-\frac{1}{2^k}, 1-\frac{1}{2^{k+1}}]\times[\frac{1}{2^{k+1}}, \frac{1}{2^k}]$ is a copy of $s_3$, for $k\geq 1$.  
This covers the interior of the unit square, and northern, western edges, and northwest corner, if included in the domain of $\partial^*$.  Use $\partial^*$ to define the corners $b, r, t$ if included in the domain of $\partial^*$. Use the interpolant of the lower and upper clusters of $\partial^*(S)$ densely along the southern edge of the unit square, if $S$ is included in the domain of $\partial^*$, similarly for the eastern edge.

The proof for northwestern limits is completely analogous.

\smallskip

\noindent\textbf{Proof of Lemma~\ref{closed:shuffle} (shuffles)} The closure of the rectangle of $s'$ will be $[0,1]\times[0, 1]$.   Let $M \subseteq \mcs$ be such that the conditions in Definition~\ref{shuffle}, the definition of a shuffle, are satisfied. Let $d$ be the open diagonal line-segment $((0, 1), (1, 0))$. 
We describe how to construct $s'$ in stages, as illustrated in \Cref{fig:limit}(b).  First fill the area of $(0, 1) \times (0,1)$ below $d$ densely with $\partial'(-)$, and fill the area above this diagonal densely with $\partial'(+)$.   Fill $d$ with any element of $M$.

For any edge that $\partial'$ indicates should be closed in $s'$, assign the appropriate sequence of supplementary clusters, and evenly spaced transition points. Assign the appropriate transition values at the transition points, and between transition points fill the edge densely with the appropriate interpolant. If $\partial'$ indicates any corners are required, assign them the appropriate maximal consistent set. 

Next, we successively modify the interior of the construction. Each point will be updated at most once, so this process has a well-defined limit. We maintain a finite set $S$ of disjoint open subsegments of $d$, initialised to $\set{d}$.  At a later stage, pick $d'\in S$ and $b\in\Delta \cup M$. If $b\in \Delta$, reassign the closed rectangle whose diagonal is the central third of $d'$, using the surrectangle $s_b$. Otherwise, reassign the midpoint $\vec m$ of $d'$ with the maximal consistent set $b$.  In either case, the segment $d'$ is replaced by two in $S$---in the first case the open initial and final thirds of $d'$, in the second, the two halves of $d' \setminus \{\vec m\}$.  Schedule the choices of $d'$ and $b$ so that for every segment $d'$ occurring in this construction, for every $b\in\Delta \cup M$, a choice $d^*, b$ is eventually selected, where $d^*$ is a subsegment of $d'$.  

The limit of this process gives a well-defined map $f$ from the appropriate subset of $[0,1]\times[0,1]$ to $\mcs$, and supplementary clusters where appropriate. We argue that together these form a surrectangle, $s'$.

The constraints placed on $\partial'$ by the ordering conditions in the definition of a biboundary, together with the ordering conditions in the definition of a shuffle, ensure  that $f$ is preorder preserving. Our construction has the property that every point below $d$ that did not undergo reassignment has an open subset of points in its future that also did not undergo reassignment, \emph{and} has a copy of each $s_\partial$, and each $m \in M$ in its future. Noting this, it is straightforward to check that all future defects are either resolved internally or passed upwards. The other conditions in the definition of a surrectangle are also straightforward to check.

 Clearly the biboundary of $s'$ is $\partial'$, as required.

\smallskip

\noindent\textbf{Proof of Lemma~\ref{lem:disjoint}} If $\Gamma^-$ and $\Gamma^+$ are disjoint, then by the boundedness of $R$, and since they are closed, they are bounded away from each other, with a bound $\varepsilon > 0$ say. Then we may divide $R$  into a \emph{finite} grid  of rectangles each with a diagonal shorter than $\varepsilon$.  The restriction of $s$ to such a rectangle has height $N$ or less, so yields a fabricated biboundary. Then $\partial^s$ is a join of such biboundaries, and is therefore itself fabricated.

\smallskip

\noindent\textbf{Proof of Lemma~\ref{lem:converge}}  
Using the inductive hypothesis, we may assume that the biboundary of $s\restr{[\vec y\wedge \vec x, \vec y\vee \vec x]}$ is of height $N+1$, that is, both $\Gamma^-$ and $\Gamma^+$ intersect the interior of $[\vec x \wedge \vec y, \vec x \vee \vec y]$.
 As each $\vec x_i$ is in $R\setminus (\down I^-\cup \up I^+)$, we know $s\restr{[\vec x_i, \vec x \vee \vec y]}$ (if $[\vec x_i, \vec x \vee \vec y]$ is nondegenerate) has height at most $N$, so its biboundary is fabricated, similarly for $s\restr{[\vec x \wedge \vec y, \vec x_i]}$.  Hence, using joins, it suffices to prove the biboundary of $s\restr{[\vec x_i\wedge\vec x, \vec x_i\vee\vec x]}$ is fabricated, for some $i$. Then it follows using Lemma~\ref{lem:disjoint} and the inductive hypothesis that we may assume $\Gamma^-$ and $\Gamma^+$ meet at $\vec x$, that no point due west of $\vec x$ is in $\Gamma^-$, and that no point due north of $\vec x$ is in $\Gamma^+$. Together, these assumptions imply the upper cluster on the southern edge of $s\restr{[\vec y\wedge \vec x, \vec y\vee \vec x]}$ is constantly $c^-$, and the lower cluster on the eastern edge is constantly $c^+$. 

Since there are only finitely many biboundaries, by taking a subsequence, we can assume the biboundary of ${s\restr{[\vec y\wedge \vec x_i, \vec y\vee \vec x_i]}}$ is constant, and  by our previous assumptions we may also assume that $\partial^{s\restr{[\vec x_0, \vec y\vee\vec  x_1]}}$ has constant lower cluster $\partial^s(+)$ on its eastern edge, and $\partial^{s\restr{[\vec y\wedge \vec x_1, \vec x_0]}}$ has constant upper cluster $\partial^s(-)$ on its southern edge. Then the biboundary of ${s\restr{[\vec y\wedge \vec x, \vec y\vee \vec x]}}$ is a southeastern limit of the biboundary of ${s\restr{[\vec y\wedge \vec x_0, \vec y\vee \vec x_0]}}$ using the biboundaries of ${s\restr{[\vec x_0, \vec y\vee\vec  x_1]}}, {s\restr{[\vec y\wedge \vec x_1, \vec x_0]}}$, and ${s\restr{[\vec x_0\wedge \vec x_1, \vec x_0\vee \vec x_1]}}$. All of these are fabricated, by the hypothesis that $\vec y \equiv \vec x_1$. Hence the biboundary of ${s\restr{[\vec y\wedge \vec x, \vec y\vee \vec x]}}$ is fabricated. Furthermore the biboundary of the restriction of $s$ to any nondegenerate $[\vec w, \vec z] \subsetneq [\vec y\wedge \vec x, \vec y\vee \vec x]$ is clearly fabricated, so $\vec y\equiv \vec x$.

\smallskip

\noindent\textbf{Proof of Lemma~\ref{lem:equiv}}   It is clear that if $[\vec p\wedge \vec q, \vec p\vee \vec q]$ is degenerate then $\vec p\equiv \vec q$.
If $\vec p$ is an immediate $\triangleleft$-successor of $\vec q$  pick $\vec y \triangleleft \vec x_0\triangleleft \vec x_1\triangleleft\ldots$ in the interior of $[\vec p\wedge \vec q, \vec p\vee \vec q]$, converging to $\vec p$.  Since $\vec y, \vec x_i$ are in the interior of the rectangle, the upper and lower clusters of $s\restr{[\vec y\wedge \vec x_i, \vec y\vee \vec x_i]}$ are bounded away, hence the biboundary is fabricated.  By Lemma~\ref{lem:converge} the biboundary of $s\restr{[\vec y\wedge \vec p, \vec y\vee \vec p]}$ is fabricated.  Similarly, by considering northwestern limits, the biboundary of $s\restr{[\vec p\wedge \vec q, \vec p\vee \vec q]}$ is fabricated and $\vec p\equiv \vec q$.  By Lemma~\ref{lem:converge} and its diagonal dual, the $\equiv$-equivalence classes are topologically closed. The lemma follows.

\smallskip

\noindent\textbf{Proof of Lemma~\ref{lem:approx}}  
The upper boundary $\Gamma^-$ of $I^-$ is homeomorphic to the closed unit interval $[0, 1]$, and such a homeomorphism $f$ will map $R(E)\cap \Gamma^-$ to an interval, for any $\approx$-equivalence class $E$. The set $I^-$ is the disjoint union of the  closed sets $R(E)\cap \Gamma^-$ as $E$ ranges over $\approx$-equivalence classes. Hence $\{f(R(E)\cap \Gamma^-) \mid E \in {P/{\approx}}\}$ is a partition of $[0, 1]$ into closed intervals. It follows that if there is more than one $\approx$-equivalence class, then uncountably many of these closed intervals are singletons; see \cite[Lemma~2.9]{hrminkowski}. For each singleton interval, the corresponding $\approx$-equivalence class must itself be a singleton.

\smallskip

\noindent\textbf{Proof of Lemma~\ref{lem:delta1}}
 If $\Gamma^-$ and $\Gamma^+$ are disjoint, they are bounded apart, by $\varepsilon$ say. Then either $\Gamma^-$ is bounded away from north edge of $s$ by $\frac{\varepsilon}{\sqrt2}$, or $\Gamma^+$ is bounded away from the \emph{west} edge by $\frac{\varepsilon}{\sqrt2}$. Without loss of generality, assume the latter. Let $s_1$ be the restriction of $s$ to the region at a distance no more than $\frac{\varepsilon}{\sqrt2}$ from the west edge of $s$, and $s_2$ be the restriction of $s$ to the region \emph{at least} $\frac{\varepsilon}{\sqrt2}$ from the west edge. Then $\partial = \partial^{s_1}\join_|\partial^{s_2}$, and $\partial^{s_1}$ has height no greater than $N$. Hence it is possible to choose  option 1 with $\partial^{s_1}$, $\partial^{s_2}$, and check $\partial_1$ is fabricated using no more than $S(N) + \delta$ space. The tail-recursive call is to the biboundary of $s_2$, which is smaller than $s$ in one dimension by $\frac{\varepsilon}{\sqrt2}$ and---if still of height $N+1$---has upper and lower clusters still bounded apart by $\varepsilon$. Hence iterating the described choice scheme requires space $S(N) + \delta$ each iteration, and must eventually result in a reduction of the height of the second biboundary $s_2$ in the join, after which the recursive call to $s_2$ can succeed with $S(N)$ space. The maximum space required is $S(N) + \delta$, as claimed.

\smallskip

\noindent\textbf{Proof of Lemma~\ref{lem:delta3}}
If $\Gamma^-$ and $\Gamma^+$ intersect only at \emph{one} point on the boundary of $s$, by Lemma~\ref{lem:delta1}, we may assume (without exceeding our space allowance) it is either $\vec l$ or $\vec r$---without loss of generality, assume $\vec r$. Then for some $\vec x$ and $\vec y$ in the interior of $s$, the biboundary $\partial$ is the limit of $\partial^{s\restr{[\vec l \wedge \vec x, \vec l \vee \vec x]}}$ using $\partial^{s\restr{[\vec x , \vec l \vee \vec y]}}$, $\partial^{s\restr{[\vec l \wedge \vec y,  \vec x]}}$, and $\partial^{s\restr{[\vec x \wedge \vec y, \vec x \vee \vec y]}}$, and both $\partial^{s\restr{[\vec x , \vec l \vee \vec y]}}$ and $\partial^{s\restr{[\vec l \wedge \vec y,  \vec x]}}$ are of height no greater than $N$. Whilst checking $\partial^{s\restr{[\vec x , \vec l \vee \vec y]}}$ and $\partial^{s\restr{[\vec l \wedge \vec y,  \vec x]}}$, at most $S(N)$ space is needed, plus $3\delta$ to store the other three biboundaries. Whilst checking $\partial^{s\restr{[\vec l \wedge \vec y,  \vec x]}}$, by Lemma~\ref{lem:delta1}, at most $S(N) + \delta$ is needed, plus $\delta$ to store $\partial^{s\restr{[\vec l \wedge \vec x, \vec l \vee \vec x]}}$. Lastly, $\partial^{s\restr{[\vec l \wedge \vec x, \vec l \vee \vec x]}}$ is checked, and $S(N) + \delta$ is sufficient by Lemma~\ref{lem:delta1}. The maximum space needed is bounded by $S(N) + 3\delta$.

If $\Gamma^-$ and $\Gamma^+$ intersect at \emph{two} points on the boundary of $s$, then using the same choice scheme, $\Gamma^- \cap [\vec l \wedge \vec x, \vec l \vee \vec x]$ and $\Gamma^+ \cap [\vec l \wedge \vec x, \vec l \vee \vec x]$ will intersect only on the boundary of $s\restr{[\vec l \wedge \vec x, \vec l \vee \vec x]}$, and at at most one point. Then by the previous case, checking the biboundary of $s\restr{[\vec l \wedge \vec x, \vec l \vee \vec x]}$ can be done with $S(N) + 3\delta$ space. As this is done last, the bound $S(N) + 3\delta$ remains intact.

\smallskip

\noindent\textbf{Proof of Lemma~\ref{shuffle_bound}}
If a biboundary $\partial'$ is a shuffle over $\Delta'$ using $M' \subseteq \mcs$ then it is the shuffle of any subset $\Delta''$ of $\Delta'$ using any subset $M''$ of $M'$, so long as any future defect of $\partial'(-)$ is passed up somehow to $\Delta''$ or $M''$, and any past defect of $\partial'(+)$ is passed down to $\Delta''$ or $M''$. There are at most $n$ defects, past or future, so any shuffle is the shuffle of at most $n$ biboundaries, using at most $n$ auxiliary maximal consistent sets. Hence by choosing option 3, \Cref{alg:fabricated} can succeed in checking $\partial'$. Since each of the up to $n$ subchecks requires no more than $S(N) + 3\delta$ space, checking $\partial'$ can be accomplished using $S(N) + 3\delta + n\delta$ space.

\bibliographystyle{aiml18}

\end{document}